\tikzset{font=\small}
\newtheorem{theorem}{Theorem} [section]
\newtheorem{lemma}[theorem]{Lemma}
\newtheorem{corollary}[theorem]{Corollary}
\newtheorem{proposition}[theorem]{Proposition}
\newtheorem{example}[theorem]{Example}
\newtheorem{remark}[theorem]{Remark}
\newtheorem{definition}[theorem]{Definition}
\subjclass[2010]{20M10,  20M18, 20M05, 20M99, 20F05}
\keywords{expansion, prefix expansion, Szendrei expansion, restriction monoid, inverse monoid, partial action, premorphism}
\numberwithin{equation}{section}
\title[Expansions of monoids]{Two-sided expansions of monoids}
\author{Ganna Kudryavtseva}
\address{G. Kudryavtseva: University of Ljubljana,
Faculty of Civil and Geodetic Engineering, Jamova cesta~2, SI-1000 Ljubljana, Slovenia \and Institute of Mathematics, Physics and Mechanics, Jadranska ulica 19, SI-1000 Ljubljana, Slovenia}
\email{ganna.kudryavtseva\symbol{64}fgg.uni-lj.si, ganna.kudryavtseva\symbol{64}imfm.si}
\thanks{The author was partially supported by  ARRS grant P1-0288.}
\begin{document}

\begin{abstract}  
We initiate the study of expansions of monoids in the class of {\em two-sided restriction monoids} and show that generalizations of the Birget-Rhodes prefix group expansion, despite the absence of involution, have rich structure close to that of relatively free {\em inverse} monoids. 

For a monoid $M$ and a class of partial actions of $M$, determined by a set, $R$, of identities, we define
 ${\mathcal{FR}}_{R}(M)$ to be the universal $M$-generated two-sided restriction monoid with respect to partial actions of $M$ determined by $R$. This is an $F$-restriction monoid which (for a certain $R$) generalizes the Birget-Rhodes  prefix expansion $\widetilde{G}^{{\mathcal R}}$ of a group $G$.  Our main result provides a coordinatization of ${\mathcal{FR}}_{R}(M)$ via a partial action  product of the idempotent semilattice $E({\mathcal{FI}}_{R}(M))$ of a similarly defined {\em inverse} monoid, partially acted upon by $M$.  
The result by Fountain, Gomes and Gould on the structure of the free two-sided restriction monoid is recovered as a special case of our result. 

We show that some properties of ${\mathcal{FR}}_{R}(M)$  agree well with suitable properties of  $M$, such as being cancellative or embeddable into a group. 
We observe that if $M$ is an inverse monoid, then ${\mathcal{FI}}_{R_s}(M)$, the free inverse monoid with respect to strong premorphisms, is isomorphic to the Lawson-Margolis-Steinberg generalized prefix expansion $M^{pr}$.  This gives a presentation of $M^{pr}$ and  leads to a model for  ${\mathcal{FR}}_{R_s}(M)$ in terms of the known model for $M^{pr}$. 

\end{abstract}

\maketitle

\section{Introduction}\label{s0:introduction}

The  {\em Birget-Rhodes prefix expansion of a group} \cite{BR89} and, in particular, its model proposed by Szendrei \cite{Szendrei}, is a construction that assigns to an arbitrary group $G$ an $E$-unitary inverse monoid, $\widetilde{G}^{{\mathcal R}}$, which has a number of interesting properties and important applications.

There exists an extensive literature on the extension of this construction from groups and inverse semigroups \cite{LMS} to monoids and their generalizations \cite{FG, FGG_enlargements, Gomes1, GH, H, H1}. As an analogue of $\widetilde{G}^{{\mathcal R}}$ all works mentioned (as well as \cite{Gomes_Gould, G_graph} which generalize the Margolis-Meakin graph expansions \cite{MM}) output  {\em one-sided restriction monoids}. Given that  {\em two-sided restriction monoids} (see Section \ref{sect:prelim} for definition and discussion of these objects) inherit more structure present in inverse monoids than one-sided ones, in particular, left-right dual properties, it looks somewhat surprising that expansions of monoids in the class of two-sided restriction monoids have not  so far been systematically studied. The present paper initiates such a study by showing that they have rich structure close to that of relatively free  inverse monoids. 

Szendrei's model  \cite{Szendrei} represents the elements of $\widetilde{G}^{{\mathcal R}}$ by pairs $(A, g)$ with $A\subseteq G$ being a finite subset containing $1$ and $g$.  It is this model of $\widetilde{G}^{{\mathcal R}}$ that found generalizations to monoids. Applied to a monoid $M$, it produces a one-sided restriction monoid, $Sz(M)$,  the {\em Szendrei expansion} of $M$. In the group case,  we have $(A,g)^+=(A, g)(A,g)^{-1} = (A, 1)$ and $(A,g)^*= (A,g)^{-1}(A, g) = (g^{-1}A, 1)$. In the monoid case, one puts $(A,m)^+=(A,1)$, which makes $Sz(M)$ a one-sided restriction monoid.  However, since monoids do not have involution, there is no similar way to extend $(A,g)^*$  to the monoid case.

Szendrei's model of $\widetilde{G}^{{\mathcal R}}$ appears  in Exel's work \cite{Exel} as an inverse monoid being a quotient of a certain inverse monoid,  ${\mathcal{S}}(G)$, given by generators and relations, determined by the property that any partial action of $G$ can be lifted to an action of ${\mathcal{S}}(G)$. Kellendonk  and Lawson \cite{KL} completed Exel's work by showing that ${\mathcal{S}}(G)$ is in fact isomorphic to $\widetilde{G}^{{\mathcal R}}$. Due to the correspondence between partial actions and premorphisms,  $\widetilde{G}^{{\mathcal R}}$ is uniquely determined by its universal property: the map $G\to \widetilde{G}^{{\mathcal R}}$, $g\mapsto (\{1,g\},g)$, is a premorphism universal amongst all premorphisms from $G$ to inverse monoids. 

In this paper, we take the universal property of  $\widetilde{G}^{{\mathcal R}}$, extended to the setting where $M$ is a monoid,  as the definition of an analogue of $\widetilde{G}^{{\mathcal R}}$. In order to provide a unified treatment of various classes of partial actions of monoids by partial bijections  that extend partial actions of groups (or their relaxed version omitting the requirement that the inverse element be preserved), we introduce the notions of a $(\cdot\,,^*,^+,1)$-identity over $X^*$ and of a premorphism satisfying such an identity (see Section \ref{sect:identities}). The only requirement we need to impose for our arguments to go through, is that identities under consideration be satisfied by all monoid morphisms. We call such identities {\em admissible}.  This leads to the notion of a class of partial actions of  monoids  defined by admissible identities. For example, the classes of all partial actions, of strong partial actions and of actions are defined by admissible identities.

For a monoid $M$ and  a set,~$R$, of admissible identities, we define ${\mathcal{FR}}_{R}(M)$ to be the universal $M$-generated two-sided restriction monoid such that the inclusion map $\iota_{{\mathcal{FR}}_{R}(M)}\colon M\to {\mathcal{FR}}_{R}(M)$ is a premorphism satisfying identities $R$. 
Our crucial observation  is that the projection semilattice $P({\mathcal{FR}}_{R}(M))$ turns out to be isomorphic to the idempotent semilattice $E({\mathcal{FI}}_{R}(M))$ of the similarly defined universal {\em inverse} monoid ${\mathcal{FI}}_{R}(M)$. Composing the inclusion premorphism $\iota_{{\mathcal{FI}}_{R}(M)}\colon M\to {\mathcal{FI}}_{R}(M)$ with the {\em Munn representation} of ${\mathcal{FI}}_{R}(M)$, one obtains a premorphism from $M$ to $T_{E({\mathcal{FI}}_{R}(M))}$, the {\em Munn inverse monoid} of the semilattice $E({\mathcal{FI}}_{R}(M))$.
This data, via a partial action product construction, gives rise to the two-sided restriction monoid $E({\mathcal{FI}}_R(M))\rtimes M$ (for its precise definition, see Subsection \ref{subs:partial_prod}). Theorem \ref{th:main}, our main result, states that ${\mathcal{FR}}_{R}(M)$ is isomorphic to $E({\mathcal{FI}}_R(M))\rtimes M$.
In the case where $R$ defines  the class of monoid morphisms and $M=A^*$, we recover the result by Fountain, Gomes and Gould \cite{FGG} on the structure of the free $A$-generated two-sided restriction monoid. Theorem \ref{th:main} provides a precise sense of the main thesis of this paper that ${\mathcal{FR}}_{R}(M)$, despite the absence of involution, has rich structure close to that of ${\mathcal{FI}}_{R}(M)$. 

We show that properties of the monoid $M$ agree well with suitable properties of ${\mathcal{FR}}_{R}(M)$. Thus $M$ is left cancellative (resp. right cancellative or cancellative) if and only if ${\mathcal{FR}}_{R}(M)$ is right ample (resp. left ample or ample), see Corollary~\ref{cor:ample}. Furthermore, $M$ embeds into a group if and only if the canonical morphism ${\mathcal{FR}}_{R}(M)\to {\mathcal{FI}}_{R}(M)$ is injective, see Theorem~\ref{th:consequences2}. We pay special attention to the case where $M$ is an inverse monoid and show that ${\mathcal{FI}}_{R_s}(M)$, the free inverse monoid over $M$ with respect to strong premorphisms, is isomorphic to the {\em Lawson-Margolis-Steinberg  generalized prefix expansion} $M^{pr}$ of~$M$~\cite{LMS}, see~Theorem~\ref{th:presentation}. In view of Theorem~\ref{th:main} and the known model for $M^{pr}$~\cite{LMS}, this leads to a model of ${\mathcal{FR}}_{R_{s}}(M)$, the free restriction monoid over $M$ with respect to strong premorphisms. In the case where $G$ is a group, ${\mathcal{FR}}_{R_{s}}(G)$ is isomorphic to each of ${\mathcal{FI}}_{R_s}(G)$ and $\widetilde{G}^{{\mathcal R}}$, see~Corollary~\ref{cor:group}.

The paper is organized as follows. In Section \ref{sect:prelim} we recall definitions and facts on two-sided restriction semigroups needed in this paper and prove Theorem \ref{th:ample1}. In Section \ref{sect:identities} we define and discuss admissible $(\cdot\, ,^*, ^+,1)$-identities over $X^*$ and the notion of a premorphism satisfying such an identity.  In Section \ref{sect:expansions} we introduce the expansions ${\mathcal{FR}}_{R}(M)$ and study their first properties. Further, in Section~\ref{sect:F}, we state the universal $F$-restriction  property of ${\mathcal{FR}}_{R}(M)$ which generalizes a result by Szendrei \cite{Szendrei}.  In Section~\ref{sect:main} we introduce the inverse monoids ${\mathcal{FI}}_R(M)$ and the partial action products $E({\mathcal{FI}}_R(M))\rtimes M$ and then formulate Theorem \ref{th:main}. Section~\ref{sect:proof} is devoted to the  proof of Theorem \ref{th:main}.  Finally, in  Section~\ref{sect:special} we consider the special cases where $M$ is  embeddable into a group or is an inverse monoid.  

\section{Preliminaries}\label{sect:prelim} 
\subsection{Restriction monoids} In this section we recall the definition and basic properties of two-sided restriction semigroups and monoids. Our presentation  follows recent work \cite{Szendrei2013, Szendrei2014} on the subject. For more details, we refer the reader to \cite{G,H2, Cornock_phd}. 

A {\em left restriction semigroup} is an algebra $(S; \cdot \, ,  ^+)$, where $(S;\cdot \,)$ is a semigroup and $^+$ is a unary operation satisfying the following identities:
\begin{equation}\label{eq:axioms:plus}
x^+x=x, \,\,\, x^+y^+=y^+x^+, (x^+y)^+=x^+y^+, \,\,\, xy^+=(xy)^+x.
\end{equation}
Dually, a {\em right restriction semigroup} is an algebra $(S; \cdot \, , ^*)$, where $(S;\cdot \,)$ is a semigroup and $^*$ is a unary operation satisfying the following identities:
\begin{equation}\label{eq:axioms:star}
xx^*=x, \,\,\, x^*y^*=y^*x^*, \,\,\, (xy^*)^*=x^*y^*, \,\,\, x^*y=y(xy)^*.
\end{equation}

Left and right restriction semigroups are {\em one-sided restriction semigroups}.

A {\em two-sided restriction semigroup}, or just a {\em restriction semigroup}, is an algebra $(S; \cdot \, , ^*, ^+)$ of type $(2,1,1)$, where $(S;\cdot \, , ^+)$ is a left restriction semigroup, $(S;\cdot \, , ^*)$ is a right restriction semigroup and the following identities hold:
\begin{equation}\label{eq:axioms:common}
(x^+)^*=x^+,\,\,\, (x^*)^+=x^*.
\end{equation}

A restriction semigroup possessing an identity element is called a {\em restriction monoid}. A restriction  monoid is thus an algebra $(S; \cdot \, , ^*, ^+, 1)$ of type $(2,1,1,0)$. Note that the axioms imply that, in a restriction monoid, $1^*=1^+=1$. 

Morphisms and subalgebras of restriction semigroups and restriction monoids are taken with respect to the given signatures.
It is immediate from the definition that restriction semigroups and restriction monoids form varieties of algebras. By $FR(X)$ we denote the free restriction monoid over a set $X$.

Let $X$ be a set, $S$ a restriction semigroup and $\iota_S\colon X\to S$ a (not necessarily injective) map. We say that $S$ is $X$-{\em generated} via the map $\iota_S$ if $S$ is generated by $\iota_S(X)$. Let $S$ and $T$ be restriction semigroups and $\iota_S\colon X\to S$,  $\iota_T\colon X\to T$ be maps. Let, further, $\varphi\colon S\to T$ be a morphism. We say that $\varphi$ is $X$-{\em canonical}, or simply {\em canonical}, if $\varphi\iota_S = \iota_T$.
 
Let $S$ be a restriction semigroup. It follows from \eqref{eq:axioms:common} that 
$$
\{s^*\colon s\in S\}=\{s^+\colon s\in S\}.
$$
This set, denoted by $P(S)$, is closed with respect to the multiplication and is a semilattice with $e\leq f$ if and only if $e=ef=fe$ and $e\wedge f=ef$. The top element of $P(S)$ is $1$ and  $e^*=e^+=e$ holds for all $e\in P(S)$.  It is called the {\em semilattice of projections} of $S$ and its elements are called {\em projections}. A projection is necessarily an idempotent, but a restriction semigroup may contain idempotents that are not projections. 

It follows that a restriction semigroup $S$ satisfying $P(S)=S$ is necessarily a semilattice. Since any semilattice $S$ is a restriction semigroup with $e^*=e^+=e$ for all $e\in S$, restriction semigroups $S$  satisfying $S=P(S)$ are precisely semilattices. The opposite extreme are restriction semigroups $S$ satisfying $|P(S)|=1$. Then, necessarily, $S$ is a monoid and $P(S)=\{1\}$ so that  $s^*=s^+=1$ holds for all $s\in S$. Since any monoid $S$ is a restriction semigroup with $s^*= s^+= 1$ for all $s\in S$, restriction semigroups $S$ satisfying $|P(S)|=1$ are precisely monoids. A monoid $S$, looked at as a restriction semigroup with $|P(S)|=1$, is called a {\em reduced restriction semigroup}.   

The following equalities follow from the axioms and will be frequently  used in the sequel:
\begin{equation}\label{eq:mov_proj}
es = s(es)^* \, \text{ and } \, se = (se)^+s \, \text{ for all } \, s\in S  \,\, \text{ and } \, e\in P(S);
\end{equation}
\begin{equation}\label{eq:consequences}
(st)^*=(s^*t)^*,\,\,\, (st)^+=(st^+)^+ \text{ for all } \, s,t\in S;
\end{equation}
\begin{equation}\label{eq:frequent}
(se)^*=(s^*e)^* = s^*e \, \text{ and } \, (es)^+ = (es^+)^+ = es^+ \text{ for all } \, s\in S  \,\, \text{ and } \, e\in P(S).
\end{equation}

In Section \ref{sect:proof} we will need the following equalities that follow from \eqref{eq:mov_proj} and \eqref{eq:frequent}:
\begin{equation}\label{eq:fu}
((se)^+s)^*  = s^*e \, \text{ and }\,  (s(es)^*)^+ = es^+ \text{ for all } \, s\in S  \,\, \text{ and } \, e\in P(S).
\end{equation}
The {\em natural partial order} $\leq$ on a restriction semigroup  $S$ is defined, for $s,t\in S$, by $s\leq t$ if and only if there is $e\in P(S)$ such that
 $s=et$.   Elements $s,t\in S$ are said to be {\em compatible}, denoted $s\sim t$, if $st^* = ts^*$ and $t^+s = s^+t$.  The following properties related to the natural partial order and the compatibility relation will be used throughout the paper, often without reference.

\begin{lemma}\label{lem:lem1} Let $S$ be a restriction semigroup and $s,t\in S$. Then:
\begin{enumerate}
\item \label{i:b1}
  $s\leq t$ if and only if  $s=tf$ for some  $f\in P(S)$;
\item \label{i:b2} $s\leq t$ if and only if $s=ts^*$ if and only if $s=s^+t$; 
\item \label{i:b5} $s\leq t$ implies $su\leq tu$ and $us\leq ut$ for all $u\in S$;
\item\label{i:b6}  $s\leq t$ implies $s^*\leq t^*$ and $s^+\leq t^+$; 
\item $s\leq t$ implies $s\sim t$;
\item\label{i:b6a} if $s,t\leq u$ for some $u\in S$ then $s\sim t$;
\item\label{i:b8} if $s\sim t$ and $s^*=t^*$ (or $s^+=t^+$) then $s=t$.
\end{enumerate}
\end{lemma}

Restriction semigroups generalize inverse semigroups. Indeed, an inverse semgroup $S$  can be endowed with the structure of a restriction semigroup by putting 
$s^{*} = s^{-1}s$ and $s^{+} = ss^{-1}$ for all $s\in S$.  
Note that, for $S$ inverse, $P(S)$ coincides with the semilattice $E(S)$ of idempotents of $S$. 

As usual, inverse semigroups are considered in the signature $(\cdot\, ,^{-1}\!)$ and inverse monoids in signature $(\cdot\, ,^{-1}\!,1)$. By $FI(X)$ we denote the free inverse monoid over a set $X$.
Since $FR(X)$ is the free $X$-generated restriction monoid, there exist an $X$-canonical morphism (of restriction monoids) $FR(X)\to FI(X)$. 

As already mentioned, monoids and semilattices are basic examples of restriction semigroups, similar to the way in which  groups and semilattices are basic examples of inverse semigroups. It is of fundamental importance that the {\em Ehresmann-Namboripad-Schein theorem} for inverse semigroups admits a natural extension to restriction semigroups, which states that the category of restriction semigroups is isomorphic to the category of {\em inductive categories}. Therefore, restriction semigroups are naturally arising generalizations of of inverse semigroups. However, in general, restriction semigroups are very far from being  inverse, essentially because they do not have an involution.

Let $\sigma$ be the minimum congruence on a restriction semigroup $S$ such that $e \mathrel{\sigma} f$ for all $e,f\in P(S)$. 
Each of the following statements is equivalent to $s\mathrel{\sigma} t$ (see \cite[Lemma 1.2]{CG}): 
\begin{enumerate}
\item there is $e\in P(S)$ such that $es=et$;
\item there is $e\in P(S)$ such that $se=te$. 
\end{enumerate}
Note that $s\sim t$ and, in particular, $s\leq t$, imply $s\mathrel{\sigma} t$, for all $s,t\in S$.

The quotient $S/{\sigma}$ is a reduced restriction semigroup and  is the maximum reduced quotient of $S$.  
By $\sigma^{\natural}\colon S\to S/{\sigma}$ we denote the moprhism induced by~$\sigma$. We also put:
 $$[t]^{\sigma} = \{s\in S \colon \sigma^{\natural}(s)=t\},   \,\, t\in  S/{\sigma}, \,\, \text{ and } \,\  [s]_{\sigma} = \{v\in S\colon v\mathrel{\sigma} s\},\,\, s\in S.$$
 
 In general, if $\rho$ is a congruence on an algebra $S$ and $s\in S$, the congruence class of $s$ is denoted by $[s]_{\rho}$.

A restriction semigroup $S$ is called {\em proper} if the following two conditions hold:
\begin{enumerate}
\item for all $s,t\in S:$  if  $s^*=t^*$    and  $s\mathrel{\sigma} t$ then $s=t$;
\item for all $s,t\in S:$  if  $s^+=t^+$   and  $s\mathrel{\sigma} t$ then $s=t$.
\end{enumerate}

Note that a restriction semigroup is proper if and only if $\sigma$ coincides with the compatibility relation $\sim$. Indeed, if $S$ is proper and $s\mathrel{\sigma} t$, we have $t^+s \mathrel{\sigma} s^+t$ and 
$(t^+s)^+ = (s^+t)^+$. Thus $t^+s = s^+t$ and similarly $st^* = ts^*$ whence $s\sim t$.

It is easy to see and well known (see, e.g., \cite[p. 282]{Szendrei2014}) that in the case where $S$ is an inverse semigroup, $\sigma$ is the minimum group congruence on $S$, and $S$ is proper if and only if it is $E$-unitary.  Thus proper restriction semigroups generalize $E$-unitary inverse semigroups.

A restriction semigroup $S$ is called {\em ample} if for all $s,t,u\in S$:
\begin{equation}\label{eq:ample}
su=tu \Rightarrow su^+=tu^+ \,\,\,\, \text{ and } \,\,\,\, us=ut\Rightarrow u^*s=u^*t.
\end{equation}

If only the first (resp., the second) of these two implications is required to hold, $S$ is called {\em left ample} (resp. {\em right ample}).

A semigroup $S$ is called {\em left} (resp. {\em right}) {\em cancellative} provided that $us=ut$ implies $s=t$ (resp. $su=tu$ implies $s=t$), for all $s,t,u\in S$. It is called {\em cancellative} if it is both left and right cancellative.

If $(A, \leq)$ is a poset and $B\subseteq A$, the {\em order ideal} generated by $B$ is the set
$$
B^{\downarrow} = \{a\in A\colon a\leq b \text{ for some } b\in B\}. 
$$
If $b\in A$ we write $b^{\downarrow}$ for $\{b\}^{\downarrow}$ and call this set the {\em principal order ideal} generated by $b$.

\subsection{Premorphisms from a monoid to a restriction monoid}\label{subs:prem}
Let $M$ be a monoid  and $T$ a restriction monoid. 
\begin{definition} \label{def:prem} {\em A {\em premorphism} from $M$ to $T$ is a map $\varphi\colon M\to T$ such that the following conditions hold:
\begin{enumerate}
\item[(PM1)] $\varphi(1)=1$;
\item[(PM2)]  $\varphi(m)\varphi(n)\leq \varphi(mn)$ for all  $m,n\in M$.
\end{enumerate}}
\end{definition}
Clearly, the following are two equivalent forms of (PM2):
\begin{enumerate}
\item[(PM2$'$)] $\varphi(m)\varphi(n) = \varphi(mn) (\varphi(m)\varphi(n))^*$ for all  $m,n\in M$;
\item[(PM2$''$)] $\varphi(m)\varphi(n) = (\varphi(m)\varphi(n))^+\varphi(mn)$ for all  $m,n\in M$.
\end{enumerate}
Because inverse monoids can be considered as a special case of restriction monoids,  a {\em premorphism from} $M$ {\em to an inverse monoid} arises as a special case of this definition.

If the axiom (PM2) is replaced by a stronger axiom
\begin{enumerate}
\item[(PM3)]  $\varphi(m)\varphi(n)=\varphi(mn)$ for all  $m,n\in M$,
\end{enumerate}
the premorphism $\varphi$ becomes, of course, a {\em monoid morphism}.

\begin{definition}\label{def:strong} {\em A premorphism $\varphi$ from a monoid $M$ to a restriction monoid  is called
 \begin{itemize}
\item {\em left strong} if $\varphi(m)\varphi(n)= \varphi(m)^+ \varphi(mn)$ for all $m,n\in M$;
 \item {\em right strong} if $\varphi(m)\varphi(n)=\varphi(mn)\varphi(n)^*$ for all $m,n\in M$;
 \item {\em strong} if it is both left and right strong.
 \end{itemize}}
 \end{definition}

\begin{remark}{\em Premorphisms from $M$ to the symmetric inverse monoid ${\mathcal{I}}(X)$ have counterparts in terms of {\em partial actions by partial bijections} of $M$ on $X$ (see \cite{Kud}), but in the present paper we choose to adhere to the language of premorphisms. For a comprehensive survey on partial actions, we refer the reader to \cite{D}.}
\end{remark}

\subsection{Proper restriction semigroups}\label{subs:structure_proper}

We now recall the Cornock-Gould structure result on proper restriction semigroups \cite{CG}. We remark that in~\cite{CG} a pair of  partial actions, called a {\em double action}, satisfying certain compatibility conditions,  was considered, and in \cite{Kud} we reformulated this using one partial action by partial bijections. Here we restate the construction of \cite{Kud}  in terms of premorphisms. Let $T$ be a monoid,  $Y=(Y,\wedge)$  a semilattice and assume that we are given a premorphism $\varphi\colon T\to {\mathcal I}(Y)$, $t\mapsto \varphi_t$. We assume that for every $t\in T$ the map $\varphi_t$  satisfies the following axioms: 
\begin{enumerate}
\item[(A)] $\operatorname{\mathrm{dom}} \varphi_t$ and  $\operatorname{\mathrm{ran}}\varphi_t$ are order ideals of $Y$;
\item[(B)] $\varphi_t:\operatorname{\mathrm{dom}}\varphi_t\to \operatorname{\mathrm{ran}}\varphi_t$ is an order-isomorphism;
\item[(C)] $\operatorname{\mathrm{dom}}\varphi_t\neq \varnothing$.
\end{enumerate}

We put:
$$
Y \rtimes_{\varphi} T
=\{(y,t)\in Y\times T\colon  y\in \operatorname{\mathrm{ran}}\varphi_t\}.
$$ 
If $\varphi$ is understood, $Y \rtimes_{\varphi} T$ is sometimes denoted simply by $Y \rtimes T$.

\begin{remark} {\em The notation $Y \rtimes_{\varphi} T$ is new within the given context. In \cite{CG, Kud}  the notation $\mathcal{M}(T,Y)$ is used instead. 
We opted to change the notation because, if $T$ is a group, the construction agrees with the well-known construction of the partial transformation groupoid of a partial action (see, e.g., \cite{MS}), which, in turn, has deep ties with  partial action cross product $C^*$-algebras via the work of Abadie \cite{A}, see \cite{MS}.}
\end{remark}

The multiplication and the unary operations $^*$ and $^+$ on $Y \rtimes_{\varphi} T$  are defined by
\begin{equation}\label{eq:proper1}
(x,s)(y,t)=(\varphi_s(\varphi_s^{-1}(x)\wedge y), st);
\end{equation}
\begin{equation}\label{eq:proper2}
(x,s)^*=(\varphi_s^{-1}(x),1), \,\, (x,s)^+=(x,1).
\end{equation}

Note that  $\varphi_s^{-1}(x)\wedge y \leq \varphi_s^{-1}(x)$ so that  $\varphi_s^{-1}(x)\wedge y \in \operatorname{\mathrm{dom}}\varphi_s$, by axiom (A). Since $y\in \operatorname{\mathrm{ran}}\varphi_t$, we similarly have that $\varphi_s^{-1}(x)\wedge y \in \operatorname{\mathrm{ran}}\varphi_t$. Hence $\varphi_s^{-1}(x)\wedge y \in \operatorname{\mathrm{dom}}\varphi_s\varphi_t$. It now follows from $\varphi_s\varphi_t\leq \varphi_{st}$ that $\varphi_s^{-1}(x)\wedge y \in \operatorname{\mathrm{dom}}\varphi_{st}$, so that \eqref{eq:proper1} makes sense. Furthermore, \eqref{eq:proper2} makes sense since $\varphi_1={\mathrm{id}}_Y$.

With respect to these operations, $Y \rtimes_{\varphi} T$ is a restriction semigroup. It is proper and its semilattice of projections
$$
P(Y \rtimes_{\varphi} T) = \{(y,1)\colon y\in Y\} 
$$
is isomorphic to $Y$ via the map $(y,1)\mapsto y$, $y\in Y$.
The minimum reduced congruence $\sigma$ on $Y \rtimes_{\varphi} T$ is given by
$(x,s)\mathrel {\sigma} (y,t)$ if and only if $s=t$ so that  $(Y \rtimes_{\varphi} T)/\sigma$ is isomorphic to~$T$.

Following the terminology of \cite{Petrich_Reilly79}, it is natural to call $Y \rtimes_{\varphi} T$ the {\em semidirect type product} of $Y$ by $T$ {\em determined by the premorphism} $\varphi$, or just the {\em partial action product} of $Y$ by $T$ where the partial action is determined by the premorphism $\varphi$.

In the reverse direction, let $S$ be a proper restriction semigroup. 

\begin{definition}\label{def:underlying} {\em The {\em underlying premorphism} of $S$ is the premorphism $\varphi\colon S/\sigma \to {\mathcal I}(P(S))$  given, for  $t\in S/\sigma$, by
\begin{equation}\label{eq:und}
\operatorname{\mathrm{dom}}\varphi_t = \{e\in P(S)\colon \text{ there is } s\in [t]^{\sigma} \text{ such that } e\leq s^*\} \,\, \text{ and }
\end{equation}
\begin{equation}\label{eq:und1}
\varphi_t(e) = (se)^+ \, \text{ for any } e\in \operatorname{\mathrm{dom}}\varphi_t \text{ and } s\in [t]^{\sigma}  \text{ such that } e\leq s^*.
\end{equation}} 
\end{definition} 

It follows that, for all $t\in S/\sigma$,
\begin{equation}\label{eq:und2}
\operatorname{\mathrm{ran}}\varphi_t = \{e\in P(S)\colon \text{ there is } s\in [t]^{\sigma} \text{ such that } e\leq s^+\} \,\, \text{ and }
\end{equation}
\begin{equation}\label{eq:und3}
\varphi_t^{-1}(e) = (es)^* \, \text{ for any } e\in \operatorname{\mathrm{ran}}\varphi_t \text{ and } s\in [t]^{\sigma}  \text{ such that } e\leq s^+.
\end{equation}

It can be verified (or see \cite{CG}) that $\varphi$ is well defined and satisfies axioms
 (A), (B) and~(C). Therefore, the partial action product $P(S) \rtimes_{\varphi} S/\sigma$ can be formed and we have:
$$
P(S) \rtimes_{\varphi} S/\sigma = \{(e, t)\in P(S)\times S/\sigma \colon  \text{ there is } s\in [t]^{\sigma}  \text{ such that } e\leq s^+\}.
$$
The operations on $P(S) \rtimes_{\varphi} S/\sigma$ are:
$$
(e,u)(f,v) = (e(sf)^+, uv), \,\, (e,u)^* = ((es)^*, 1), \,\, (e,u)^+ = (e,1), \text{ where } s\in  [u]^{\sigma}.
$$

The following theorem is due to Cornock and Gould \cite{CG}.

\begin{theorem}\label{th:CG} Let $S$ be a proper restriction semigroup. Then the map  $f \colon S\to P(S) \rtimes_{\varphi} S/\sigma$ given by $f(s) = (s^+, \sigma^{\natural}(s))$ is an isomorphism.
\end{theorem}

\begin{remark} {\em The fact that the underlying premorphism of a proper restriction semigroup has its image in an inverse semigroup reveals the connection between proper restriction semigroups and inverse semigroups. The main result of this paper, Theorem \ref{th:main}, makes this connection precise, for classes of proper restriction semigroups considered therein.}
\end{remark}

We conclude this section with the following result.

\begin{theorem}\label{th:ample1} Let $S$ be a proper restriction semigroup. Then $S$ is left ample (resp. right ample or ample) if and only if the monoid $S/\sigma$ is right cancellative (resp. left cancellative or cancellative).
\end{theorem}

\begin{proof} We put $M=S/\sigma$. Due to Theorem \ref{th:CG} we can assume that $S$ is equal to $P(S) \rtimes_{\varphi} M$ where $\varphi$ is the underlying premorphism of $S$.

We first prove the claim that $S$ is left ample if and only if $S/\sigma$ is right cancellative.

We assume that $P(S) \rtimes_{\varphi} M$ is left ample and that  $ac=bc$ in $M$. Let $e\in  \operatorname{\mathrm{ran}}\varphi_a$, $f\in \operatorname{\mathrm{ran}}\varphi_b$ and $g'\in  \operatorname{\mathrm{ran}}\varphi_c$. We put $g=g'\varphi_a^{-1}(e)\varphi_b^{-1}(f)$. Then $g\in \operatorname{\mathrm{ran}}\varphi_c$ (by Axiom~(A)), $g\leq \varphi_a^{-1}(e)\varphi_b^{-1}(f)$ so that $g\in \operatorname{\mathrm{dom}}\varphi_a\cap  \operatorname{\mathrm{dom}}\varphi_b$ (by Axiom (B)). We show that $\varphi_a(g)=\varphi_b(g)$. Since $g\in \operatorname{\mathrm{ran}}\varphi_c\cap  \operatorname{\mathrm{dom}}\varphi_a$, it follows that $\varphi_c^{-1}(g)\in  \operatorname{\mathrm{dom}}\varphi_a\varphi_c$. But $\varphi_a\varphi_c \leq \varphi_{ac}$, since $\varphi$ is a premorphism. Hence $\varphi_c^{-1}(g)\in \operatorname{\mathrm{dom}}\varphi_{ac}$ and 
$\varphi_a(g) = \varphi_a\varphi_c(\varphi_c^{-1}(g)) = \varphi_{ac}(\varphi_c^{-1}(g))$. Similarly, $\varphi_c^{-1}(g)\in \operatorname{\mathrm{dom}}\varphi_{bc}$ and $\varphi_b(g) = \varphi_{bc}(\varphi_c^{-1}(g))$. Hence, in view of $ac=bc$, we have $\varphi_a(g) = \varphi_b(g)$, as desired. Note that $(e,a),(f,b), (g,c) \in P(S) \rtimes_{\varphi} M$ and
\begin{equation}\label{eq:m84}
(e,a)(g,c) = (\varphi_a(g), ac) = (\varphi_b(g), bc) = (f,b)(g,c).
\end{equation}
In addition,
\begin{equation}\label{eq:m85}
(e,a)(g,c)^+ = (e,a)(g,1) = (\varphi_a(g),a)
\end{equation}
and, similarly,
\begin{equation}\label{eq:m86}
(f,b)(g,c)^+ = (f,b)(g,1) = (\varphi_b(g),b).
\end{equation}
It follows from \eqref{eq:m84}, \eqref{eq:m85} and  \eqref{eq:m86} that $a=b$, so that $M$ is right cancellative.

In the reverse direction, we assume that $M$ is right cancellative. Let $(e,a),(f,b), (g,c) \in P(S) \rtimes_{\varphi} M$ be such that $(e,a)(g,c)=(f,b)(g,c)$. We calculate:
$$
(e,a)(g,c) = (\varphi_a(\varphi_a^{-1}(e)g), ac) \, \text{ and } \, (f,b)(g,c) = (\varphi_b(\varphi_b^{-1}(f)g), bc).
$$
The assumption implies that $\varphi_a(\varphi_a^{-1}(e)g)= \varphi_b(\varphi_b^{-1}(f)g)$ and $ac=bc$. Since $M$ is right cancellative, the latter equality yields $a=b$. But then:
$$
(e,a)(g,c)^+ = (e,a)(g,1) = (\varphi_a(\varphi_a^{-1}(e)g), a) = (\varphi_b(\varphi_b^{-1}(f)g), b) = (f,b)(g,c)^+.
$$
It follows that $P(S) \rtimes_{\varphi} M$ is left ample.

We now prove the claim that $S$ is right ample if and only if $S/\sigma$ is left cancellative.

We assume that $P(S) \rtimes_{\varphi} M$ is right ample and that $ca=cb$ in $M$. Note that if $e\in \operatorname{\mathrm{ran}}\varphi_a$ and $f\in \operatorname{\mathrm{ran}}\varphi_b$ then $ef \in \operatorname{\mathrm{ran}}\varphi_a\cap  \operatorname{\mathrm{ran}}\varphi_b$, by Axiom (A). Let $e\in \operatorname{\mathrm{ran}}\varphi_a\cap  \operatorname{\mathrm{ran}}\varphi_b$ and $f\in  \operatorname{\mathrm{ran}}\varphi_c$. Then $(e,a), (e,b), (f,c) \in P(S) \rtimes_{\varphi} M$ and 
\begin{equation}\label{eq:m81}
(f,c)(e,a) = (\varphi_c(\varphi_c^{-1}(f)e), ca) = (\varphi_c(\varphi_c^{-1}(f)e), cb) = (f,c)(e,b).
\end{equation}
In addition,
\begin{equation}\label{eq:m82}
(f,c)^*(e,a)  = (\varphi_c^{-1}(f),1)(e,a) = (\varphi_c^{-1}(f)e,a)
\end{equation}
and, similarly,
\begin{equation}\label{eq:m83}
(f,c)^*(e,b)  = (\varphi_c^{-1}(f),1)(e,b) = (\varphi_c^{-1}(f)e,b).
\end{equation}
From \eqref{eq:m81} we have that $(f,c)^*(e,a) = (f,c)^*(e,b)$. In view of \eqref{eq:m82} and \eqref{eq:m83}, this yields that $a=b$, so that $M$ is left cancellative. 

In the reverse direction, we assume that $M$ is left cancellative. Let $(e,a), (g,b), (f,c) \in P(S) \rtimes_{\varphi} M$ be such that $(f,c)(e,a) = (f,c)(g,b)$. We calculate: 
\begin{equation*}\label{eq:m81a}
(f,c)(e,a) = (\varphi_c(\varphi_c^{-1}(f)e), ca) \, \text{ and } \, (f,c)(g,b) = (\varphi_c(\varphi_c^{-1}(f)g), cb) .
\end{equation*}
It follows that $\varphi_c(\varphi_c^{-1}(f)e) = \varphi_c(\varphi_c^{-1}(f)g)$ and $ca=cb$. The first of these equalities simplifies to $\varphi_c^{-1}(f)e = \varphi_c^{-1}(f)g$ and the second one implies that $a=b$.
Furthermore, similarly as in \eqref{eq:m82} and \eqref{eq:m83}, we have $(f,c)^*(e,a)  = (\varphi_c^{-1}(f)e,a)$ and $(f,c)^*(g,b)  = (\varphi_c^{-1}(f)g,a)$. It follows that $(f,c)^*(e,a)=(f,c)^*(g,b)$. Therefore, $P(S) \rtimes_{\varphi} M$ is right ample.

Combining the two results proved, it follows that $S$ is ample if and only if $M$ is cancellative, which finishes the proof.
\end{proof}

\section{Admissible identities}\label{sect:identities}

 Let $X$ be a set and $X^*$ the free monoid over $X$. The elements of $X^*$ will be called {\em variables}, the empty word will be denoted by $\lambda$. By a $(\cdot\, ,^*,^+,1)$-{\em term}  over $X^*$ we mean an element of the free $X^*$-generated restriction monoid $FR(X^*)$. We denote the identity element of $FR(X^*)$ by $1$ and remark that $\lambda\neq 1$.
 
If $p\in FR(X^*)$, we sometimes write $p$ as $p(u_1,\dots, u_n)$ to indicate that the variables occurring in $p$ are among $u_1,\dots, u_n \in X^*$ (see \cite{BS}).
For a map $f\colon X \to M$, where $M$ is a monoid,  we put $\widehat{f}$ to be the free extension of $f$ to  $X^*$.  

\begin{definition}\label{def:value} {\em Let $M$ be a monoid, $S$ be a restriction monoid and  $f\colon X\to M$, $\gamma\colon M\to S$ be maps. The {\em value} of $p(u_1,\dots, u_n)\in FR(X^*)$ on ${\gamma}$ {\em with respect to} $f$ is defined to be the element
$p(\gamma(\widehat{f}(u_1)),\dots, \gamma(\widehat{f}(u_k)))\in S$ which we denote by $[p(u_1,\dots, u_n)]_{\gamma,f}$.}
\end{definition}  

The following is immediate.

 \begin{lemma}\label{lem:useful26}
Let $M$ be a monoid, $S_1$, $S_2$ be restriction monoids, $\gamma \colon M\to S_1$ be a map and $g\colon S_1\to S_2$ be a morphism.  Then for any $p\in FR(X^*)$ we have:
\begin{equation}\label{eq:value}
[p]_{g\gamma, f} = g([p]_{\gamma, f}).
\end{equation}
\end{lemma}

\begin{definition} {\em A $(\cdot\, ,^*,^+,1)$-{\em identity over}  $X^*$ is an equality of the form
\begin{equation}\label{eq:identity}
p(u_1,\dots, u_n)= q(u_1,\dots, u_n)
\end{equation} where $p(u_1,\dots, u_n), q(u_1,\dots, u_n) \in FR(X^*)$.}
\end{definition} 

In this paper we consider only $(\cdot\, ,^*,^+,1)$-identities over $X^*$, and, for brevity, we refer to them as $(\cdot\, ,^*,^+,1)$-{\em identities}, or simply {\em identities}.

\begin{definition}\label{def:satisfaction} {\em Let $\gamma\colon M\to S$ be a map from a monoid $M$ to a restriction monoid $S$. We say that $\gamma$ {\em satisfies} identity \eqref{eq:identity} if for every  map $f\colon X \to M$ the equality
$$
[p(u_1,\dots, u_k)]_{\gamma,f} = [q(u_1,\dots, u_k)]_{\gamma,f}
$$
holds in $S$. 
We say that $\gamma$ satisfies a set $R$ of identities  if it satisfies each identity in $R$.}
\end{definition}

\begin{definition} \label{def:class} {\em Let $R$ be a set of $(\cdot\, ,^*,^+,1)$-identities over $X^*$, and define $V(R)$ to be the class of all premorphisms satisfying $R$.  A class ${\mathcal{V}}$ of premorphisms from monoids to restriction monoids is an {\em equational class of premorphisms} (or simply an {\em equational class}), if ${\mathcal{V}}=V(R)$ for some set $R$ of $(\cdot\, ,^*,^+,1)$-identities over $X^*$.} 
\end{definition}

\begin{definition}{\em Two sets of $(\cdot\, ,^*,^+,1)$-identities over $X^*$, $R_1$ and $R_2$, are called {\em equivalent} if $V(R_1)=V(R_2)$. }
\end{definition}

The class ${\mathcal{PM}}$ of all premorphisms from  monoids to restriction monoids  is the biggest equaitonal class of premorphisms, i.e., ${\mathcal{PM}}=V(\varnothing)$. Clearly, ${\mathcal{V}}\subseteq {\mathcal{PM}}$ for any equational class of premorphisms ${\mathcal{V}}$.

If $R'$ consists of identities satisfied by {\em all} premorphisms then $V(R) = V(R\cup R') = V(R\setminus R')$ for any set $R$ of identities.  In particular,
${\mathcal{PM}} = V(R')$  for any set~$R'$ of identities satisfied by all premorphisms, for example, for 
\begin{equation*}\label{eq:class_prem}
R' = \{\lambda =1,\, x_1\cdot x_2 = x_1x_2 \cdot (x_1\cdot x_2)^*\}.
\end{equation*}

The following is a central definition.

\begin{definition} \label{def:admissible_identity} {\em 
A $(\cdot\, ,^*,^+,1)$-identity over $X^*$ will be called {\em admissible} if all monoid morphisms satisfy this identity.
}
\end{definition}

We give some examples of sets of admissible identities and equational classes of premorphisms they define.

\begin{example}
 {\em The set $\varnothing$ defines the class ${\mathcal{PM}}$ of all premorphisms. Furthermore, the sets
\begin{equation}\label{eq:class_hom}
R_m = \{x_1\cdot x_2 = x_1x_2\};
\end{equation} 
\begin{equation}\label{eq:class_ls}
R_{ls} = \{x_1\cdot x_2 =  x_1^+\cdot x_1x_2\};
\end{equation}
\begin{equation}\label{eq:class_rs}
R_{rs} = \{x_1\cdot x_2 =  x_1x_2\cdot x_2^*\};
\end{equation}
\begin{equation}\label{eq:class_s}
R_s=R_{ls}\cup R_{rs} 
\end{equation} 
consist of admissible identities.  The class  $V(R_m)$, denoted also by ${\mathcal{M}}$, is the class of all monoid morphisms; the classes $V(R_{ls})$, $V(R_{rs})$ and $V(R_{s})$  are the classes of all left strong premorphisms, right strong premorphisms, and premorphisms, respectively. }
\end{example}

We note that a set $R$ of $(\cdot\, ,^*,^+,1)$-identities over $X^*$ consists of admissible identities if and only if $${\mathcal{M}} \subseteq V(R) \subseteq {\mathcal{PM}}.$$

\begin{remark} {\em We are mostly interested in the cases  $R=\varnothing$, $R=R_m$ and $R=R_s$. Developing the theory for an arbitrary set $R$ of admissible identities, we aim not only to extend the generality, but also to provide a unified and thus conceptually simpler treatment of the special cases of main interest.}
 \end{remark}
 
Observe that, applying \eqref{eq:mov_proj}, any $p\in FR(X^*)$ can be written as
  $p=u_1\cdot \ldots \cdot u_k \cdot e$, where $k\geq 1$, $u_i\in X^*$, $1\leq i\leq k$, and $e\in P(FR(X^*))$.
  It follows that an identity $p=q$, where $p,q\in FR(X^*)$, can be rewritten as
\begin{equation}\label{eq:sat}
 u_1\cdot \ldots \cdot u_k \cdot e = v_1\cdot \ldots \cdot v_n \cdot f,
\end{equation}
where $k,n\geq 1$, $u_1,\dots, u_k, v_1,\dots, v_n\in X^*$ and $e,f\in  P(FR(X^*))$. By definition, a premorphism $\gamma\colon M\to S$  satisfies the identities  $u_1\cdot \ldots \cdot u_k = (u_1\dots u_k) \cdot (u_1\cdot \ldots \cdot u_k)^*$ and $v_1\cdot \ldots \cdot v_n = (v_1\dots v_n) \cdot (v_1\cdot \ldots \cdot v_n)^*$. It follows that $\gamma$ satisfies the identity \eqref{eq:sat} if and only if it satisfies the identity
\begin{equation}\label{eq:sat1}
(u_1\dots u_k) \cdot (u_1\cdot \ldots \cdot u_k)^* \cdot e = (v_1\dots v_n) \cdot (v_1\cdot \ldots \cdot v_n)^* \cdot f.
\end{equation}  
Therefore, for the identity $p=q$ there is an equivalent identity of the form 
\begin{equation}\label{eq:form11} 
u\cdot e = v\cdot f  \text{ where } u,v\in X^* \text{ and }  e,f \in  P(FR(X^*)).
\end{equation}

Assume that  $p=q$ is an admissible identity. Since it is satisfied by any monoid morphism, so does the identity  $u\cdot e = v\cdot f$ just found. In particular, it is satisfied by the morphism ${\mathrm{id}}_{X^*}$. Thus $u=v$ holds in $X^*$.  It follows that the identity $p=q$  is equivalent to an identity of the form 
\begin{equation}\label{eq:form12} 
u\cdot e = u\cdot f  \text{ where } u\in X^* \text{ and }  e,f \in  P(FR(X^*)).
\end{equation} 
We have proved the following.

\begin{proposition} \label{prop:identities10} Let $R$ be a set of admissible identities. Then it is equivalent to another set, $R'$, of admissible identities that contains only identities of the form \eqref{eq:form12}.
\end{proposition}

For example, $V(R_{rs}) = V(R'_{rs})$ where $R'_{rs} = \{x_1x_2\cdot (x_1\cdot x_2)^* =  x_1x_2\cdot x_2^*\}$.

\section{Two-sided expansions of monoids}\label{sect:expansions}

\subsection{The expansions and the universal property} For a monoid $M$, we put $\lfloor M \rfloor=\{\lfloor m\rfloor \colon m\in M\}$. 
The free restriction monoid $FR \lfloor M \rfloor$ is $M$-generated via the map 
$\iota_{FR \lfloor M \rfloor}\colon M \to FR \lfloor M \rfloor$,  $m\mapsto \lfloor m\rfloor$.

\begin{definition}\label{def:expansion_restr} {\em Let $M$ be a monoid.}
\begin{itemize}
\item {\em We define ${\mathcal{FR}}(M)$ to be the quotient of $FR \lfloor M \rfloor$ by the congruence $\widetilde{\delta}$ generated by the relations:
\begin{equation}\label{eq:em19}
\lfloor 1\rfloor = 1;
\end{equation} 
\begin{equation}\label{eq:e1}
\lfloor m\rfloor \lfloor n\rfloor =\lfloor mn\rfloor (\lfloor m\rfloor \lfloor n\rfloor)^*, \,\,\, m,n\in M.
\end{equation}
These relations say that the map $\iota_{{\mathcal{FR}}(M)}\colon M\to {\mathcal{FR}}(M)$, $m\mapsto \lfloor m\rfloor$, is a premorphism. We call ${\mathcal{FR}}(M)$ the  
the {\em  free restriction monoid over} $M$ {\em with respect to premorphisms.}}
\item
{\em  Let $R$ be a set of admissible $(\cdot\, ,^*,^+,1)$-identites over $X^*$. We define ${\mathcal{FR}}_{R}(M)$ to be the quotient of ${\mathcal{FR}}(M)$ by the congruence $\delta_{R}$ generated by the relations:
$$
[p]_{\iota_{{\mathcal{FR}}(M)},f}= [q]_{\iota_{{\mathcal{FR}}(M)},f},$$
where $f$ runs through all maps $X\to M$ and $p=q$ runs through  $R$.
We call ${\mathcal{FR}}_{R}(M)$ the  
the {\em  free restriction monoid over} $M$ {\em with respect to}~$R$.}
\end{itemize}
\end{definition}

Note that ${\mathcal{FR}}_{\varnothing}(M)={\mathcal{FR}}(M)$. The definition of ${\mathcal{FR}}_{R}(M)$ says that the map $\iota_{{\mathcal{FR}}_{R}(M)}\colon M\to {\mathcal{FR}}_{R}(M)$, $m\mapsto \lfloor m\rfloor$, belongs to $V(R)$. Moreover, ${\mathcal{FR}}_{R}(M)$ is $M$-generated via the map $\iota_{{\mathcal{FR}}_{R}(M)}$ and the projection morphism $\delta_R^{\natural} \colon {\mathcal{FR}}(M) \to {\mathcal{FR}}_{R}(M)$ is $M$-canonical. 

Definition \ref{def:expansion_restr} is equivalent to the following universal property of  ${\mathcal{FR}}_{R}(M)$.

\begin{proposition}\label{prop:universal} Let  $R$ be a set of admissible identities and $M$ be a monoid.  Then  for any premorphism $\lambda\colon M\to S$, where $S$ is a  restriction monoid, satisfying $\lambda \in V(R)$,
there is a morphism $\eta\colon  {\mathcal{FR}}_{R}(M) \to S$ of  restriction monoids such that $\lambda = \eta\iota_{{\mathcal{FR}}_{R}(M)}$.
\end{proposition}

\begin{remark}\label{rem:p} {\em Let $\lambda={\mathrm{id}}_M$.   Then $\lambda \in {\mathcal M}\subseteq V(R)$. Proposition \ref{prop:universal} implies that the map $\lfloor m\rfloor \mapsto m$, $m\in M$, extends to an $M$-canonical morphism $p_M\colon {\mathcal{FR}}_{R}(M) \to M$.}
\end{remark}

We record an immediate consequence of Remark \ref{rem:p}.

\begin{corollary}\label{cor:injective}
The premorphism $\iota_{{\mathcal{FR}}_{R}(M)}$ is injective.
\end{corollary}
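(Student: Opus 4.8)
The plan is to deduce Corollary~\ref{cor:injective} directly from the construction in the proof of Proposition~\ref{prop:expansion}(1). The key observation is that we already have, for any monoid $M$, a surjective homomorphism $p_M\colon {\mathcal{FR}}_R(M)\to M$ of restriction monoids (viewing $M$ as a reduced restriction monoid) extending $\lfloor m\rfloor\mapsto m$. I would simply compose the inclusion premorphism $\iota$ with $p_M$ and read off the result.

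First I would note that $p_M\iota\colon M\to M$ sends $m=\iota^{-1}(\lfloor m\rfloor)$, more precisely it sends $m\mapsto \lfloor m\rfloor\mapsto m$, so that $p_M\circ\iota=\mathrm{id}_M$. This is immediate from the definitions: $\iota(m)=\lfloor m\rfloor$ by definition of the inclusion premorphism, and $p_M(\lfloor m\rfloor)=m$ by definition of the projection map in Proposition~\ref{prop:expansion}(1). Hence for any $m,n\in M$ with $\iota(m)=\iota(n)$ we get $m=p_M(\iota(m))=p_M(\iota(n))=n$, which is exactly the injectivity of $\iota$.

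The one point requiring a word of care is that $p_M$ is a genuine (well-defined) map, so that the composite $p_M\iota$ makes sense; but this is precisely what the proof of Proposition~\ref{prop:expansion}(1) establishes, via Proposition~\ref{prop:universal} applied to the identity premorphism on $M$ obeying relations $R$. Since the statement says the corollary is an immediate consequence of that proof, I would not reprove well-definedness but cite it. There is essentially no obstacle here: the entire content is the retraction identity $p_M\iota=\mathrm{id}_M$, and a left inverse forces injectivity. The only thing to double-check is the direction of composition and that $\mathrm{id}_M$ is indeed a premorphism obeying $R$ (it obeys the premorphism relations trivially since in a reduced restriction monoid all projections equal $1$, and it obeys the weaker admissible relations because they are implied by the homomorphism relations~\eqref{eq:e4}, which the identity map satisfies).
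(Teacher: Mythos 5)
Your proof is correct and is exactly the argument the paper intends: the corollary is stated as an immediate consequence of the proof of Proposition~\ref{prop:expansion}(1), where $p_M$ is obtained by applying Proposition~\ref{prop:universal} to the identity premorphism on $M$ (which obeys $R$ since admissible relations follow from \eqref{eq:em19} and \eqref{eq:e4}), and the retraction identity $p_M\iota=\mathrm{id}_M$ forces injectivity of $\iota$. No gaps; your care about well-definedness of $p_M$ and about the identity map obeying $R$ matches the paper's setup precisely.
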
 

\begin{proposition}\label{prop:expansion} Let $R$ be a set of admissible identities and
$\alpha\colon M_1\to M_2$ a monoid morphism. Then there is a morphism $\widetilde{\alpha}\colon {\mathcal{FR}}_{R}(M_1)\to {\mathcal{FR}}_{R}(M_2)$ of  restriction monoids such that the following diagram commutes:
\begin{center}
\begin{tikzcd}[column sep=3.2em, row sep=2.2em]
 {\mathcal{FR}}_{R}(M_1) \arrow[r, "\widetilde{\alpha}"]  \arrow[d, "p_{M_1}"]&   {\mathcal{FR}}_{R}(M_2) \arrow[d, "p_{M_2}"]\\[0.5 cm]
  M_1 \arrow[r, "\alpha"]  & M_2 
  \end{tikzcd}
  \end{center}
\end{proposition}

\begin{proof}   
Let $\alpha' = \iota_{{\mathcal{FR}}_{R}(M_2)}\alpha$. Then $\alpha'\in V(R)$. By Proposition \ref{prop:universal} it extends to a morphism $\widetilde{\alpha}\colon {\mathcal{FR}}_{R}(M_1)\to {\mathcal{FR}}_{R}(M_2)$. That the diagram above commutes is immediate from the construction of $\widetilde{\alpha}$.
\end{proof}

\begin{remark} {\em Using  categorical language, for any set $R$ of admissible identities, we have constructed  a functor, ${\mathcal{FR}}_{R}$, from the category of monoids to the category of restriction monoids, given on objects by $M\mapsto {\mathcal{FR}}_{R}(M)$ and on morphisms by $\alpha\mapsto \tilde{\alpha}$. Moreover, there is a natural transformation, $p$, from ${\mathcal{FR}}_{R}$ to the identity functor on the category of monoids, whose component at a monoid $M$, given by the  map $p_M\colon {\mathcal{FR}}_{R}(M)\to M$, is surjective. It follows that the functor ${\mathcal{FR}}_{R}$ is an {\em expansion} in the sense of \cite{BR}.} 
\end{remark}

\begin{example}\label{ex:expansions}\mbox{}
{\em \begin{itemize}
\item Let $R=R_m$ (see \eqref{eq:class_hom}).
We call ${\mathcal{FR}}_{R_m}(M)$ the {\em free restriction monoid over}~$M$ ({\em with respect to morphisms}). The defining relations for ${\mathcal{FR}}_{R_m}(M)$ are 
\eqref{eq:em19}
and 
\begin{equation}\label{eq:e4}
\lfloor m\rfloor \lfloor n\rfloor =\lfloor mn\rfloor, \,\,\, m,n\in M.
\end{equation}

\item Let $R=R_{ls}$ (see \eqref{eq:class_ls}). We call ${\mathcal{FR}}_{R_{ls}}(M)$ the {\em free restriction monoid over} $M$ {\em with respect to left strong premorphisms}. Its defining relations are~\eqref{eq:em19} and 
 \begin{equation}\label{eq:e2}
\lfloor m\rfloor \lfloor n\rfloor = \lfloor m\rfloor^+\lfloor mn\rfloor, \,\,\, m,n\in M.
\end{equation}

\item Let $R=R_{rs}$ (see \eqref{eq:class_rs}). We call ${\mathcal{FR}}_{R_{rs}}(M)$  the {\em free restriction monoid over} $M$ {\em with respect to right strong premorphisms}. Its defining relations are \eqref{eq:em19} and 
\begin{equation}\label{eq:e3}
\lfloor m\rfloor \lfloor n\rfloor = \lfloor mn\rfloor \lfloor n\rfloor^*, \,\,\, m,n\in M.
\end{equation}

\item Let now $R=R_{s}$ (see \eqref{eq:class_s}). We call ${\mathcal{FR}}_{R_{s}}(M)$  the {\em free restriction monoid over} $M$ {\em with respect to strong premorphisms}. The defining relations for ${\mathcal{FR}}_{R_{s}}(M)$ are \eqref{eq:em19}, \eqref{eq:e2} and \eqref{eq:e3}.
 \end{itemize}
}
\end{example}
 
 \begin{example}\label{ex:free}
 {\em In the case where $M=A^*$ is a free $A$-generated monoid, ${\mathcal{FR}}_{R_m}(A^*)$ is isomorphic to the {\em free restriction monoid} $FR(A)$}. \end{example}

The following diagram illustrates the connection between the defined expansions. Arrows represent components at $M$ (which are all surjective) of the canonical natural transformations between the expansions (looked at as functors).

\begin{center}
\begin{tikzcd}[column sep=0.2em, row sep=0.6em]
 & {\mathcal{FR}}(M) \arrow[rd] \arrow[dl] &  \\ [0.5em]
  {\mathcal{FR}}_{R_{ls}}(M) \arrow[rd]  &  & {\mathcal{FR}}_{R_{rs}}(M)  \arrow[ld] \\[0.5em]
  & {\mathcal{FR}}_{R_s}(M) \arrow[d] &\\ [1.1em]
  & \,\,\,\,\,\,\, {\mathcal{FR}}_{R_m}(M) &
  \end{tikzcd}
  \end{center}
  
\section{$F$-restriction monoids and their associated premorphisms} \label{sect:F} 

\subsection{The Munn representation of a restriction semigroup} \label{subs:munn} Recall that the {\em Munn semigroup} $T_Y$ of a semilattice $Y$ is the semigroup of all order-isomorphisms between principal order ideals of $Y$ under composition. This is an inverse semigroup contained in ${\mathcal I}(Y)$. If $Y$ has a top element, $T_Y$ is a monoid whose identity is ${\mathrm{id}}_Y$, the identity map on $Y$. 

The {\em Munn representation} of a restriction semigroup $S$ is a morphism $\alpha\colon S\to T_{P(S)}$, $s\mapsto \alpha_s$, given, for each $s\in S$, by
\begin{equation}\label{eq:munn}
\operatorname{\mathrm{dom}}\alpha_s = (s^*)^{\downarrow} \,  \text{ and } \, \alpha_s(e) = (se)^+, \, e\in \operatorname{\mathrm{dom}}\alpha_s.
\end{equation}
It follows that 
\begin{equation}\label{eq:munn1}
\operatorname{\mathrm{ran}}\alpha_s = (s^+)^{\downarrow} \,  \text{ and } \, \alpha_s^{-1}(e) = (es)^*, \, e\in \operatorname{\mathrm{ran}}\alpha_s.
\end{equation}
If $S$ is an inverse semigroup, $\alpha_s(e) = ses^{-1}$ for all $e\in \operatorname{\mathrm{dom}}\alpha_s$ and $\alpha_s^{-1}(e) = \alpha_{s^{-1}}(e) = s^{-1}es$ for all
$e\in \operatorname{\mathrm{ran}}\alpha_s$.

\subsection{The two premorphisms underlying an $F$-restriction monoid} A restriction semigroup is called $F$-{\em restriction} if every $\sigma$-class has a maximum element. It is easy to check (or see \cite[Lemma 5]{Kud}) that an $F$-restriction semigroup is necessarily a monoid with the identity element being the maximum projection, and is proper. 

Let $S$ be an $F$-restriction monoid. We put $T=S/\sigma$. Inspired by similar considerations for $F$-inverse monoids \cite{KL}, we introduce the map 
\begin{equation}\label{eq:tau1}
\tau_S\colon T\to S \text{ given by } \tau_S(t) ={\mathrm{max}} \, [t]^{\sigma}, \, t\in T.
\end{equation}
\begin{lemma} The map $\tau_S$ is a premorphism. 
\end{lemma}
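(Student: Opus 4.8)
The plan is to verify the two premorphism axioms (PM1) and (PM2) directly for the map $\tau\colon T\to S$ sending a $\sigma$-class to its maximum element. For (PM1), I would observe that the $\sigma$-class of the identity $1\in T$ contains $1\in S$, and since $1$ is the maximum projection of $S$ (as noted for $F$-restriction monoids), every element $a$ of $S$ satisfies $a\leq 1$; in particular $1$ is the maximum element of its own $\sigma$-class, so $\tau(1)=1$, giving (PM1).

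For (PM2) the goal is to show $\tau(s)\tau(t)\leq \tau(st)$ for all $s,t\in T$. Writing $s_s=\tau(s)$ and $s_t=\tau(t)$ for the respective maximum elements, the key point is that $\tau$ is compatible with multiplication in the sense that $s_s s_t$ lies in the $\sigma$-class $st$: indeed $\sigma$ is a monoid congruence, so the product of the class of $s_s$ and the class of $s_t$ is the class of $s_s s_t$, which equals $st$. Hence $s_s s_t$ is an element of the $\sigma$-class $st$, whose maximum element is $\tau(st)=s_{st}$ by definition. Therefore $s_s s_t\leq s_{st}$, which is exactly $\tau(s)\tau(t)\leq\tau(st)$, establishing (PM2).

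The main thing to get right is the first step of the (PM2) argument, namely that $\sigma$ being a monoid congruence forces $s_s s_t$ to belong to the class $st$. This is the substantive point: once one knows $s_s s_t\in st$ and that $\tau(st)$ is by definition the maximum of the class $st$, the inequality $s_s s_t\leq\tau(st)$ is automatic. I would spell out that for $a,b\in T$ with representatives $s_s\in a$ (here $a=s$) and $s_t\in b$ (here $b=t$), one has $s_s s_t\in ab=st$ because multiplying the congruence classes $s$ and $t$ in the quotient $T=S/\sigma$ yields the class of any product of representatives. No delicate computation with the restriction-monoid axioms is needed beyond this observation together with the definition of $\tau$ as picking out class maxima; the compatibility of $\leq$ with multiplication is not even required, since we land inside a single $\sigma$-class and use only that its maximum dominates all its members.

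I do not anticipate a serious obstacle here: the whole argument rests on the definition of $\tau$ as the maximum-element selector together with the fact that $\sigma$ is a congruence, both of which are available from the surrounding text. The only point deserving a careful sentence is the assertion that $1\in S$ is genuinely the maximum of its $\sigma$-class in the (PM1) step, which follows from $1$ being the top projection and Lemma~\ref{lem:lem1}\eqref{i:b4}.
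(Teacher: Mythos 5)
Your treatment of (PM2) coincides exactly with the paper's proof: the paper's entire argument is the single observation that $s_{t_1}s_{t_2}\mathrel{\sigma} s_{t_1t_2}$ (because $\sigma$ is a congruence and $T=S/\sigma$), whence $s_{t_1}s_{t_2}\leq s_{t_1t_2}$ by maximality of $s_{t_1t_2}$ in its $\sigma$-class. You identify the same key point, and you are right that compatibility of $\leq$ with multiplication is not needed, since the product already lands in the class $t_1t_2$.

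However, your justification of (PM1) contains a false intermediate claim: in a restriction monoid it is \emph{not} true that every element $a$ satisfies $a\leq 1$. By Lemma~\ref{lem:lem1}(\ref{i:b2}), $a\leq 1$ if and only if $a=a^{+}\cdot 1=a^{+}$, i.e.\ if and only if $a\in P(S)$; so $1$ is the top element of the semilattice of projections, but not of $S$ itself (for instance, in a nontrivial reduced restriction monoid the natural partial order is trivial, yet $S\neq\{1\}$). Your citation of Lemma~\ref{lem:lem1}(\ref{i:b4}) only yields $e=1\cdot e\leq 1$ for projections $e$, which is the statement that $1$ is the maximum \emph{projection}, not the maximum element. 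The conclusion $\tau(1)=1$ is nevertheless correct, and the repair runs in the opposite direction to your argument: let $u$ be the maximum of the $\sigma$-class of $1$, which exists because $S$ is $F$-restriction; since $1$ belongs to that class, $1\leq u$, and Lemma~\ref{lem:lem1}(\ref{i:b2}) gives $1=1^{+}u=u$, so $\tau(1)=1$. (The paper leaves (PM1) implicit, treating it as obvious, so this slip does not affect the comparison on the substantive (PM2) step, where your argument is exactly the paper's.)
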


\begin{proof}  It is immediate that $\tau_S(1)=1$. Let $t_1, t_2\in T$. Since $\tau_S(t_1)\tau_S(t_2)  \mathrel{\sigma} \tau_S(t_1t_2)$, we have $\tau_S(t_1)\tau_S(t_2)  \leq \tau_S(t_1t_2)$.
\end{proof}

Throughout this section, $S$ is an $F$-restriction monoid.  Recall that $\varphi$ is the underlying premorphism of $S$ (see Definition \ref{def:underlying}). To simplify notation, we write $\tau$ for $\tau_S$. 

Due to the fact that $S$ is $F$-restriction, for each $t\in T$, \eqref{eq:und}, \eqref{eq:und1}, \eqref{eq:und2} and \eqref{eq:und3} simplify to:
\begin{equation}\label{eq:auxa22b}
\operatorname{\mathrm{dom}} \varphi_t = (\tau(t)^*)^{\downarrow} \, \text{ and } \, \varphi_t(e) = (\tau(t)e)^+, \,\, e\leq \tau(t)^*;
\end{equation}
\begin{equation}\label{eq:auxa22a}
\operatorname{\mathrm{dom}}\varphi_t^{-1} = \operatorname{\mathrm{ran}} \varphi_t = (\tau(t)^+)^{\downarrow} \,  \text{ and } \, \varphi_t^{-1}(e) = (e\tau(t))^*, \,\, e\leq \tau(t)^+.
\end{equation}

It follows that the image of  $\varphi$ is contained in $T_{P(S)}$ and, moreover, 
$\varphi = \alpha\tau$. 
 
\begin{proposition} \label{prop:agreement} Let $S$ be an $F$-restriction monoid, $T=S/\sigma$, $\varphi$ the underlying premorphism of $S$ and $\tau$ the premorphism given in \eqref{eq:tau1}. Then  $\varphi$ satisfies an admissible identity if and only if $\tau$ satisfies the same identity. 
\end{proposition}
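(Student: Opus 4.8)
The plan is to exploit the factorization $\varphi = \alpha\tau$, where $\alpha$ is the Munn representation, together with the explicit description \eqref{eq:admissible} of admissible relations. Since admissible relations (other than \eqref{eq:em19} and \eqref{eq:e1}, which both $\varphi$ and $\tau$ automatically obey) can all be rewritten in the canonical form $\lfloor m\rfloor e = \lfloor m\rfloor f$ with $m\in M$ and $e,f$ projections, it suffices to understand what it means for each of $\tau$ and $\varphi$ to obey a relation of this shape. The forward direction, that $\tau$ obeying a relation forces $\varphi$ to obey it, is already noted in the text as an immediate consequence of $\varphi=\alpha\tau$ and $\alpha$ being a homomorphism; so the substance of the proof is the reverse implication.

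First I would fix an admissible relation written as $\lfloor m\rfloor e = \lfloor m\rfloor f$ (the three other forms in \eqref{eq:admissible} being handled symmetrically). When we substitute the inclusion premorphism into a relation, each generator $\lfloor m\rfloor$ becomes $\tau(m)$ for $\tau$ and $\varphi(m)$ for $\varphi$, and each projection word in the $\lfloor n\rfloor$'s becomes the corresponding projection built from $\tau$-values, respectively $\varphi$-values. I would write $E_\tau, F_\tau$ for the images of the projection words $e,f$ under $\tau$ and $E_\varphi, F_\varphi$ for their images under $\varphi$. The crucial point is that, because $\varphi=\alpha\tau$ and $\alpha$ is a $(2,1,1,0)$-homomorphism, we have $E_\varphi = \alpha(E_\tau)$ and $F_\varphi=\alpha(F_\tau)$ as projections in $T_{P(S)}$; that is, $E_\varphi, F_\varphi$ are the identity maps on the principal ideals $(E_\tau)^{\downarrow}, (F_\tau)^{\downarrow}$ respectively.

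Then the hypothesis ``$\varphi$ obeys the relation'' reads $\varphi(m)E_\varphi = \varphi(m)F_\varphi$ as partial isomorphisms in $T_{P(S)}$. Using \eqref{eq:auxa22b}, $\varphi(m)$ has domain $(\tau(m)^*)^{\downarrow}$ and acts by $x\mapsto(\tau(m)x)^+$. Composing with the identity maps $E_\varphi, F_\varphi$ restricts this domain to $(\tau(m)^* E_\tau)^{\downarrow}$ on the left-hand side and to $(\tau(m)^* F_\tau)^{\downarrow}$ on the right-hand side. Equality of these two partial maps, both having the same action rule, forces equality of their domains, i.e. $\tau(m)^* E_\tau = \tau(m)^* F_\tau$. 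By the projection-moving identities \eqref{eq:mov_proj} and \eqref{eq:frequent} this is exactly equivalent to $\tau(m)E_\tau = \tau(m)F_\tau$, which says $\tau$ obeys the relation. I would carry out the same computation for the three remaining canonical forms, where one instead uses \eqref{eq:auxa22a} and the $^+$-side moving identities.

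\textbf{Main obstacle.} The delicate step is the passage from equality of partial maps in $T_{P(S)}$ back to the equation $\tau(m)E_\tau=\tau(m)F_\tau$ in $S$: the Munn representation $\alpha$ need not be injective on all of $S$, so one cannot simply ``cancel'' $\alpha$. The key is that two partial isomorphisms of $Y$ are equal precisely when they agree as set-maps, hence have the same domain; and the domain of $\varphi(m)E_\varphi$ is determined by the single projection $\tau(m)^*E_\tau$, which lives in $P(S)$ where $\alpha$ \emph{is} faithful. Thus the argument must be routed through equality of domains (a statement purely about projections) rather than through injectivity of $\alpha$ on $S$. Getting this reduction clean — and checking that the domain of the composite is exactly the principal ideal generated by $\tau(m)^*E_\tau$, using that domains and ranges are order ideals by axioms (A),(B) — is where the care is needed.
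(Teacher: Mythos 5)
Your proposal is correct, and its skeleton matches the paper's: reduce to the canonical form \eqref{eq:admissible}; observe that $\varphi$ obeys the relation iff the two partial maps in ${\mathcal I}(P(S))$ have equal domains (their action rules being identical); observe that $\tau$ obeys it iff the corresponding $^*$-projections agree, since both sides lie below $\tau(m)$; and bridge the two statements through the Munn representation. Where you genuinely diverge is at the bridge. The paper proves ${\mathrm{dom}}(\varphi'(w)) = ((\tau'(w))^*)^{\downarrow}$ for \emph{every} $w\in{\mathcal{FR}}(T')$ by a structural induction (generators, products $wu$, the operations $^*$ and $^+$, with a parallel statement for ranges), and this induction occupies most of its proof. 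You instead note that $\alpha\tau'$ and $\varphi'$ are both $(2,1,1,0)$-homomorphisms out of the free restriction monoid agreeing on generators (since $\varphi=\alpha\tau$), hence equal by uniqueness of extensions from a free object; the only instance of the domain identity you then need, ${\mathrm{dom}}(\varphi(m)E_\varphi)=(\tau(m)^*E_\tau)^{\downarrow}$, falls out of the definition of $\alpha$. This buys a substantially shorter argument, at the price of leaning on the fact that the Munn representation of a restriction monoid is a genuine $(2,1,1,0)$-homomorphism, including ${\mathrm{ran}}(\alpha(s))=(s^+)^{\downarrow}$; the paper asserts this fact, and its induction in effect re-verifies the composite identity by hand, so your shortcut is a legitimate streamlining rather than a gap. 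Your closing equivalence $\tau(m)^*E_\tau=\tau(m)^*F_\tau \iff \tau(m)E_\tau=\tau(m)F_\tau$ (apply $^*$ via \eqref{eq:frequent} one way; left-multiply by $\tau(m)$ and use $xx^*=x$ the other) is exactly the paper's step, and your ``main obstacle'' remark correctly isolates the delicate point: one never cancels $\alpha$ on $S$, only on $P(S)$, where distinct projections generate distinct principal ideals. Two cosmetic notes: the four forms in \eqref{eq:admissible} are equivalent rewritings of a single relation modulo \eqref{eq:e1}, which both premorphisms obey, so treating one form suffices and no separate symmetric cases are needed; and your ``images of the projection words $e,f$ under $\tau$'' should formally be images under the homomorphic extension $\tau'$, as you clearly intend.
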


\begin{proof} Let $p\in FR(X^*)$ and  let $g\colon X\to T$ be a map. It follows from \eqref{eq:munn} and \eqref{eq:munn1} that 
\begin{equation}\label{eq:equality11c}
\operatorname{\mathrm{dom}}([p]_{\varphi,g}) = ([p]_{\tau,g}^*)^{\downarrow}, \,\, \,  \, \, \operatorname{\mathrm{ran}}([p]_{\varphi,g}) = ([p]_{\tau,g}^+)^{\downarrow} \,\,\, \text{ and }
\end{equation}
\begin{equation}\label{eq:equality24a}
[p]_{\varphi,g}(e) = ([p]_{\tau,g}e)^+, \, e\leq [p]_{\tau,g}^*; \,\,\, [p]_{\varphi,g}^{-1}(e) = (e[p]_{\tau,g})^*, \, e\leq [p]_{\tau,g}^+.
\end{equation}

Let $p=q$ be an admissible identity. Since $\varphi=\alpha\tau$ and $\alpha$ is a morphism, it folows, in view of Lemma \ref{lem:useful26}, that $[p]_{\tau,g}=[q]_{\tau,g}$ implies $[p]_{\varphi,g}=[q]_{\varphi,g}$.

Suppose that  $[p]_{\varphi,g}=[q]_{\varphi,g}$. Since the identity $p=q$ is satisfied by  ${\mathrm{id}}_{T}$, we have
$$\sigma^{\natural}([p]_{\tau,g}) = [p]_{{\mathrm{id}}_{T},g} = [q]_{{\mathrm{id}}_{T},g} = \sigma^{\natural}([q]_{\tau,g}),$$ 
where for the first and the third equalities we used Lemma \ref{lem:useful26} taking into account that ${\mathrm{id}}_{T} = \sigma^{\natural}\tau$ with $\sigma^{\natural}$ being a morphism. 
Hence $[p]_{\tau,g} \mathrel{\sigma} [q]_{\tau,g}$. Since, in addition, \eqref{eq:equality11c} implies that $[p]_{\tau,g}^*= [q]_{\tau,g}^*$ and since $S$ is proper, we obtain $[p]_{\tau,g} = [q]_{\tau,g}$.
\end{proof} 

Let $S$ be an $F$-restriction monoid. It is called {\em left extra $F$-restriction} (resp.  {\em right extra $F$-restriction} or  {\em extra $F$-restriction}) \cite{Kud} provided that the underlying premorphism $\varphi_S$ is left strong (resp. right strong or  strong). It is called {\em ultra $F$-restriction} \cite{Kud} (or {\em perfect}~\cite{Jones}) if $\varphi_S$ is a morphism.
Proposition \ref{prop:agreement} tells us that all these classes can be equivalently defined by the respective properties of the premorphism $\tau_S$.

\subsection{Universal $F$-restriction property}

\begin{lemma}\label{lem:semi-canonical}
 Let $M$ be a monoid and $R$ a set of admissible identities. 
Then the congruence $\sigma$ on ${\mathcal{FR}}_{R}(M)$ coincides with ${\mathrm{ker}}(p_M)$. Consequently, ${\mathcal{FR}}_{R}(M)/\sigma\simeq M$.
\end{lemma}

\begin{proof} 
Let $a,b\in {\mathcal{FR}}_{R}(M)$ be such that $p_M(a) = p_M(b)$. Applying \eqref{eq:mov_proj}, we  rewrite the elements $a$ and $b$ as $a =  \lfloor u\rfloor  e$ and $b=\lfloor v \rfloor f$ where $u,v\in M$ and $e,f\in P({\mathcal{FR}}_{R}(M))$. Then $u=p_M(a) = p_M(b) = v$. Hence $\lfloor u\rfloor  ef \leq a,b$ which implies $a\mathrel{\sigma} b$.
\end{proof}

We can now state the universal $F$-restriction property of ${\mathcal{FR}}_{R}(M)$. Corollary~\ref{cor:group} shows that it extends a result by Szendrei \cite{Szendrei}.

\begin{proposition}\label{prop:univ_F}  Let $R$ be a set of admissible identities and $M$ be a monoid. 
\begin{enumerate} 
\item The restriction monoid ${\mathcal{FR}}_{R}(M)$ is $F$-restriction and $\tau_{{\mathcal{FR}}_{R}(M)} = \iota_{{\mathcal{FR}}_{R}(M)}$. In particular, ${\mathcal{FR}}_{R}(M)$ is proper.
\item Let $S$ be an $F$-restriction  monoid such that $S/\sigma\simeq M$ and assume that $\tau_S\in V(R)$. Then there is a morphism of  restriction monoids $\eta\colon {\mathcal{FR}}_{R}(M) \to S$  such that $\eta \lfloor m \rfloor = \tau_S(m)$ for all $m\in M$ and $p_M=\sigma_S^{\natural}\eta$.
\end{enumerate}
\end{proposition}

\begin{proof} (1)  This is immediate.

(2) This follows from Proposition~\ref{prop:universal}. 
\end{proof}
\begin{remark}{\em Proposition \ref{prop:univ_F} shows that the functor ${\mathcal{FR}}_{R}$ from the category of monoids to the category of $F$-restriction monoids whose underlying premorphism belongs to $V(R)$ is a left adjoint to the functor in the reverse direction that takes $S$ to $S/\sigma$.}
\end{remark}

The following result is a consequence of Theorem \ref{th:ample1} and Proposition \ref{prop:univ_F}(1).

\begin{corollary}\label{cor:ample}
Let $M$ be a monoid and $R$ a set of admissible identities. Then $M$ is left cancellative (resp. right cancellative, cancellative) if and only if ${\mathcal{FR}}_{R}(M)$ is right ample (reps. left ample, ample).
\end{corollary}

\section{The main result} \label{sect:main}
\subsection{Free inverse monoids over $M$} 
 For a monoid $M$, we put $[M]=\{[m]\colon m\in M\}$. The free inverse monoid $FI[M]$ is $M$-generated via the map $\iota_{FI[M]}\colon M\to FI[M]$, $m\mapsto [m]$.

The following definition is analogous to Definition \ref{def:expansion_restr}.

\begin{definition}\label{def:expansion_inv} Let $M$ be a monoid.
\begin{itemize}
\item {\em We define ${\mathcal{FI}}(M)$ to be the 
quotient of $FI[M]$ by the congruence $\widetilde{\gamma}$ generated by the relations:
\begin{equation}\label{eq:em19inv}
[1]= 1;
\end{equation} 
\begin{equation}\label{eq:e1inv}
[m] [n] =[mn] ([m] [n])^*, \,\,\, m,n\in M.
\end{equation}
The defining relations say that the map $\iota_{{\mathcal{FI}}(M)}\colon M\to {\mathcal{FI}}(M)$, $m\mapsto [m]$, is a premorphism.
We call ${\mathcal{FI}}(M)$ the  {\em  free inverse monoid over} $M$ {\em with respect to premorphisms.}}

\item  {\em Let $R$ be a set of admissible identities and $M$ be a monoid. We define ${\mathcal{FI}}_R(M)$ to be the 
quotient of ${\mathcal{FI}}(M)$ by the congruence $\gamma_R$ generated by the relations:
$$
[p]_{\iota_{{\mathcal{FI}}(M)},f}= [q]_{\iota_{{\mathcal{FI}}(M)},f},$$
where $f$ runs through all maps $X\to M$ and $p=q$ runs through $R$.
We call ${\mathcal{FI}}_{R}(M)$ the  {\em  free inverse monoid over} $M$ {\em with respect to}~$R$.} 
\end{itemize}
\end{definition}

Standard arguments show that an analogue of Proposition \ref{prop:universal} holds true, that is, the map $\iota_{{\mathcal{FI}}_{R}(M)}\colon M\to {\mathcal{FI}}_R(M)$, $m\mapsto [m]$, is a premorphism satisfying identities $R$ that is universal amongst all such premorphisms from $M$ to inverse monoids. 

Observe that ${\mathcal{FI}}(M) = {\mathcal{FI}}_{\varnothing}(M)$. We highlight several more examples of interest:
\begin{itemize}
\item ${\mathcal{FI}}_{R_m}(M)$, the {\em free inverse monoid over} $M$ ({\em with respect to morphisms});
\item ${\mathcal{FI}}_{R_{ls}}(M)$, the {\em free inverse monoid over} $M$ {\em with respect to left strong premorphisms}.
\item ${\mathcal{FI}}_{R_{rs}}(M)$, the {\em free inverse monoid over} $M$ {\em with respect to right strong premorphisms}.
\item ${\mathcal{FI}}_{R_{s}}(M)$, the {\em free inverse monoid over} $M$ {\em with respect to strong premorphisms}.
\end{itemize}
 
\begin{example}
{\em ${\mathcal{FI}}_{R_m}(A^*)$ is isomorphic to the {\em free inverse  monoid} $FI(A)$ over~$A$.} 
\end{example}

\begin{remark} {\em The premorphism $\iota_{{\mathcal{FI}}_R(M)}$ is not injective in general. For example, an easy calculation shows that if $M$ is a rectangular band with the identity element added then  ${\mathcal{FI}}_{R_m}(M)$ is two-element. 
}
\end{remark}
\begin{remark}
{\em It is immediate by the definition that if $M$ is an inverse monoid,  ${\mathcal{FI}}_{R_m}(M)$ is isomorhic to $M$. Thus one can not expect ${\mathcal{FI}}_R(M)$ to be in general $F$-inverse or \mbox{$E$-unitary}. There is no analogue of Proposition \ref{prop:expansion} either. (The difference with the respective property of ${\mathcal{FR}}_R(M)$ is essentially because the maximum reduced image of the {\em inverse monoid} ${\mathcal{FI}}_R(M)$ is a {\em group}, and thus can not be just monoid $M$, as it is for ${\mathcal{FR}}_R(M)$.) In particular, the functor ${\mathcal{FI}}_R$ {\em is not} an expansion}.
\end{remark}

\subsection{The partial action product $E({\mathcal{FI}}_R(M)) \rtimes M$ and the main result} \label{subs:partial_prod}
Let  $R$ be a set of admissible identities.
Since ${\mathcal{FI}}_R(M)$ is a restriction monoid (with $a^*=a^{-1}a$ and $a^+=aa^{-1}$) and $\iota_{{\mathcal{FI}}_R(M)}\in V(R)$,  Proposition \ref{prop:universal} implies that there is an $M$-canonical morphism 
 \begin{equation}\label{eq:psir}
\psi_R\colon {\mathcal{FR}}_{R}(M) \to  {\mathcal{FI}}_{R}(M).
\end{equation}

Note that $\psi_R$ does not need to be surjective.  Wondering if there is a closer connection between ${\mathcal{FR}}_{R}(M)$ and ${\mathcal{FI}}_R(M)$ we make the following crucial observations.

Let $\alpha\colon {\mathcal{FI}}_R(M) \to {\mathcal{I}}(E({\mathcal{FI}}_R(M)))$  be the Munn representation of ${\mathcal{FI}}_R(M)$ (see Subsection \ref{subs:munn}). We put $\beta = \alpha\iota_{{\mathcal{FI}}_R(M)}\colon M \to {\mathcal{I}}(E({\mathcal{FI}}_R(M)))$, $m\mapsto \beta_m$.
We have: 
$$\operatorname{\mathrm{dom}} \beta_m = ([m]^{*})^{\downarrow}, \,\, \operatorname{\mathrm{ran}} \beta_m = ([m]^{+})^{\downarrow};
$$
$$
\beta_m (e) = [m]e[m]^{-1} = ([m]e)^{+}, \,\, e\leq [m]^*;
$$
$$
\beta_m^{-1}(e) = [m]^{-1}e[m] = (e[m])^{*}, \,\, e\leq [m]^+.
$$
\begin{lemma} The map $\beta$ is a premorphism and $\beta\in V(R)$. Moreover, $\beta$  satisfies conditions (A), (B), (C) of Subsection~\ref{subs:structure_proper} for forming the partial action product $E({\mathcal{FI}}_R(M)) \rtimes_{\beta} M$. 
\end{lemma}

\begin{proof} The first assertion holds in view of Lemma \ref{lem:useful26} because $\beta = \alpha\iota_{{\mathcal{FI}}_R(M)}$ with $\alpha$ being a morphism and $\iota_{{\mathcal{FI}}_R(M)}\in V(R)$.
The second assertion is immediate. 
\end{proof}

In the sequel, to prevent overload of the notation, we suppress the index $\beta$ of $E({\mathcal{FI}}_R(M)) \rtimes_{\beta} M$.
We have:
$$E({\mathcal{FI}}_R(M)) \rtimes M = \{(e,m)\in E({\mathcal{FI}}_R(M))\times M \colon e \leq [m]^+\}.$$
The identity element of  $E({\mathcal{FI}}_R(M)) \rtimes M$ is $(1,1)$ and the rest of the operations are: 
\begin{equation}\label{eq:prod_operations}
(e,m)(f,n) = (e([m]f)^+, mn), \,\, (e,m)^* = ((e[m])^*, 1), \,\, (e,m)^+ = (e,1).
\end{equation}

\begin{proposition}\label{prop:action} \mbox{}
\begin{enumerate}
\item Let $(e,m), (f,n) \in E({\mathcal{FI}}_R(M)) \rtimes M$. Then  $(e,m) \mathrel{\sigma} (f,n)$ if and only if $m=n$. 
Consequently, $E({\mathcal{FI}}_R(M)) \rtimes M$ is $F$-restriction with ${\mathrm{max}} [(e,m)]_{\sigma} = ([m]^+,m)$.
\item The map $\tau_R\colon M \to E({\mathcal{FI}}_R(M)) \rtimes M$, given by $\tau_R(m) = ([m]^{+}, m)$, is a premorphism and $\tau_R\in V(R)$.
\end{enumerate}
\end{proposition}

\begin{proof} 
(1) Since $E({\mathcal{FI}}_R(M)) \rtimes M$ is proper, $\sigma$ coincides with the compatibility relation~$\sim$.
We thus have $(e,m) \mathrel{\sigma} (f,n)$ if and only if 
\begin{equation}\label{eq:comp_aux}
(e,m)(f,n)^* = (f,n)(e,m)^* \text{ and } (f,n)^+(e,m) = (e,m)^+(f,n).
\end{equation}
The left-hand side of the first of these equalities equals
\begin{equation*}
(e,m)(f,n)^*  = (e,m)((f[n])^*,1) = (e([m](f[n])^*)^+, m).
\end{equation*}
Similarly, the right-hand side equals $(f,n)(e,m)^* = (f([n](e[m])^*)^+, n)$. Thus if the first equality of \eqref{eq:comp_aux} holds then $m=n$. Conversely, if $m=n$ then $f([n](e[m])^*)^+  = f([m](e[m])^*)^+ = f(e[m])^* = (e[m]f)^*$ and, similarly, $e([m](f[n])^*)^+=
(e[n]f)^* = (e[m]f)^*$, so that the first equality of \eqref{eq:comp_aux} holds. The second equality of \eqref{eq:comp_aux} is equivalent to $(f,1)(e,m) = (e,1)(f,n)$ which is equivalent to $(ef, m) = (ef, n)$ which holds if and only if $m=n$. The description of $\sigma$ follows. It is now clear that $([m]^+,m) = {\mathrm{max}} [(e,m)]_{\sigma}$.

(2) Part (1) implies that $\tau_R = \tau_{E({\mathcal{FI}}_R(M)) \rtimes M}$ (see \eqref{eq:tau1}). The claim that $\tau_R\in V(R)$ now follows from Proposition \ref{prop:agreement}.
\end{proof}

 Applying Proposition \ref{prop:universal}, it follows that there is a  morphism 
\begin{equation}\label{eq:eta1}
\eta_R\colon {\mathcal{FR}}_{R}(M)\to E({\mathcal{FI}}_R(M)) \rtimes M.
\end{equation}
of restriction monoids such that $\tau_R = \eta_R\iota_{{\mathcal{FR}}_{R}(M)}$.
We are coming to our main result.

\begin{theorem}\label{th:main} Let $M$ be a monoid and  $R$ a set of admissible identities. Then $\eta_R$  is an isomorphism of restriction monoids.
\end{theorem}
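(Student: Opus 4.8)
The plan is to prove that $\widetilde{\eta}_R$ is bijective by reducing the whole statement to one semilattice map obtained by restricting $\psi_R$ to projections. Write $\theta = \psi_R|_{P({\mathcal{FR}}_R(M))}\colon P({\mathcal{FR}}_R(M)) \to E({\mathcal{FI}}_R(M))$; since $\widetilde{\eta}_R$ is a $(2,1,1,0)$-homomorphism carrying projections to projections and $P({\mathcal{M}}(M,E({\mathcal{FI}}_R(M)))) = \{(y,1)\} \cong E({\mathcal{FI}}_R(M))$, one has $\widetilde{\eta}_R(e) = (\theta(e),1)$ for $e \in P({\mathcal{FR}}_R(M))$. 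Using Lemma~\ref{lem:semi-canonical}(1), every element of ${\mathcal{FR}}_R(M)$ is $e\lfloor m\rfloor$, and a direct computation with \eqref{eq:proper1}, \eqref{eq:proper2} and $\eta_R(m) = ([m]^{\varoplus}, m)$ gives
\[
\widetilde{\eta}_R(e\lfloor m\rfloor) = (\theta(e)\wedge [m]^{\varoplus}, m).
\]
First I would record that this formula, together with Lemma~\ref{lem:semi-canonical}(2), reduces the theorem to two claims: that $\widetilde{\eta}_R$ is surjective, and that $\theta$ is injective. Indeed, if $\widetilde{\eta}_R(e\lfloor m\rfloor)=\widetilde{\eta}_R(f\lfloor n\rfloor)$, comparing second coordinates forces $m=n$; replacing $e,f$ by $e\lfloor m\rfloor^+$ and $f\lfloor m\rfloor^+$ (which alters neither $e\lfloor m\rfloor$ nor $f\lfloor m\rfloor$) we may assume $\theta(e),\theta(f)\le [m]^{\varoplus}$, so equality of first coordinates reads $\theta(e)=\theta(f)$, and injectivity of $\theta$ gives $e=f$, hence $e\lfloor m\rfloor = f\lfloor n\rfloor$.

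For surjectivity, I would consider $N = \{y \in E({\mathcal{FI}}_R(M)) : (y,1)\in \operatorname{im}\widetilde{\eta}_R\}$. It is a subsemilattice containing each $[m]^{\varoplus}=\theta(\lfloor m\rfloor^+)$, and the identities $(\widetilde{\eta}_R(\lfloor m\rfloor)(y,1))^+ = (\varphi_{[m]}(y),1)$ for $y\le [m]^{\varoast}$, and $((y,1)\widetilde{\eta}_R(\lfloor m\rfloor))^* = (\varphi_{[m]}^{-1}(y),1)$ for $y\le [m]^{\varoplus}$ (both checked via \eqref{eq:proper1}--\eqref{eq:proper2}), show that $N$ is closed under the partial maps $\varphi_{[m]}, \varphi_{[m]}^{-1}$. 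Since $E({\mathcal{FI}}_R(M))$ is generated as a semilattice by the idempotents $[m]^{\varoplus}$ under these conjugation operations, $N = E({\mathcal{FI}}_R(M))$; thus every projection $(y,1)$ lies in the image, and a general element is $(y,m) = (y,1)\,\widetilde{\eta}_R(\lfloor m\rfloor)$, which proves surjectivity of $\widetilde{\eta}_R$ and, in passing, that $\theta$ is onto.

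The main obstacle is the injectivity of $\theta$, which I would obtain by constructing an explicit left inverse. By Proposition~\ref{prop:univ_F}(1), ${\mathcal{FR}}_R(M)$ is $F$-restriction, hence proper, and its premorphism $\tau$ coincides with $\iota$, which obeys $R$; by Proposition~\ref{prop:agreement} the underlying premorphism $\varphi\colon M \to T_{P({\mathcal{FR}}_R(M))}$ (into the Munn monoid, via \eqref{eq:und} and \eqref{eq:auxa22a}) also obeys $R$. The universal property of ${\mathcal{FI}}_R(M)$ then yields an inverse-monoid homomorphism $\rho\colon {\mathcal{FI}}_R(M)\to T_{P({\mathcal{FR}}_R(M))}$ with $\rho([m])=\varphi(m)$; restricting $\rho$ to idempotents and using $E(T_Y)\cong Y$ produces a semilattice homomorphism $\theta'\colon E({\mathcal{FI}}_R(M))\to P({\mathcal{FR}}_R(M))$. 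I would then verify $\theta'\theta=\mathrm{id}$: on the generators this is the computation $\theta'([m]^{\varoplus})=\varphi(m)^{\varoplus}$, which under $E(T_Y)\cong Y$ corresponds to $\lfloor m\rfloor^+$ by \eqref{eq:auxa22a}, and likewise $\theta'([m]^{\varoast})$ corresponds to $\lfloor m\rfloor^*$; the general case follows because both $\theta$ and $\theta'$ intertwine the Munn action of $M$ on the two semilattices, so $\theta'\theta$ is an action-equivariant endomorphism of $P({\mathcal{FR}}_R(M))$ fixing a generating set, and is therefore the identity. A left inverse forces $\theta$ injective; combined with surjectivity this makes $\theta$ an isomorphism, and hence $\widetilde{\eta}_R$ an isomorphism of restriction monoids. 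The delicate points I expect to fight with are the precise generation statement for $E({\mathcal{FI}}_R(M))$ under conjugation and the bookkeeping showing that $\theta'\theta$ commutes with these conjugation operations.
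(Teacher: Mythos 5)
Your proposal is correct in substance, but it takes a genuinely different route from the paper on the hard half of the theorem. The paper also splits the proof into surjectivity plus the construction of an inverse, and its surjectivity argument (Proposition~\ref{prop:generation}) is essentially your closure argument for $N$, run as an induction on the length of a word $u$ with $e=u^{-1}u$, using exactly your identities $d_{v[a]}=\varphi_{[a]}^{-1}(d_v\wedge [a]^{\varoplus})$ and $d_{v[a]^{-1}}=\varphi_{[a]}(d_v\wedge [a]^{\varoast})$ (so your ``delicate'' generation statement for $E({\mathcal{FI}}_R(M))$ is fine: every idempotent is $u^{\varoast}$ for a single word $u$, and the recursion does the rest). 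Where you diverge is injectivity: the paper builds a full two-sided inverse $\Psi_R(e,m)=D_e\lfloor m\rfloor$ by defining a syntactic map $u\mapsto D_u$ on the free involutive monoid and verifying, word by word, compatibility with the free inverse monoid congruence, the premorphism congruence, and arbitrary admissible relations (Lemmas~\ref{lem:equiv1}, \ref{lem:crucial}, \ref{lem:idempotents}, Proposition~\ref{prop:crucial}, Lemmas~\ref{lem:crucial1}, \ref{lem:aux11m}) --- the bulk of Section~\ref{sect:proof}. You instead reduce injectivity to the projection semilattice and manufacture a left inverse $\theta'$ of $\theta=\psi_R|_{P({\mathcal{FR}}_R(M))}$ abstractly: ${\mathcal{FR}}_R(M)$ is $F$-restriction by Proposition~\ref{prop:univ_F}, its underlying premorphism $\varphi=\alpha\tau$ obeys $R$ (you only need the easy direction stated before Proposition~\ref{prop:agreement}, not the full two-way statement), and the universal property of ${\mathcal{FI}}_R(M)$ yields $\rho$ with $\rho([m])=\varphi(m)$; the equivariance bookkeeping you worry about does go through, since $\theta$ intertwines because $\widetilde{\eta}_R$ is a homomorphism and $\theta'$ intertwines because $\rho$ is, with \eqref{eq:auxa22b}--\eqref{eq:auxa22a} identifying domains and ranges, and $P({\mathcal{FR}}_R(M))$ is generated from $1$ under meets and the total maps $x\mapsto(\lfloor m\rfloor x)^+$, $x\mapsto(x\lfloor m\rfloor)^*$ by induction on terms. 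Two small points to tighten: your one-line justification of $\widetilde{\eta}_R(e)=(\theta(e),1)$ only shows the image is some projection; you should either check that $(x,s)\mapsto x[s]$ is a $(2,1,1,0)$-homomorphism $j$ with $j\widetilde{\eta}_R=\psi_R$ (both agree on the generators $\lfloor m\rfloor$), or simply define $\theta$ as the first coordinate of $\widetilde{\eta}_R$ on projections, which suffices for your argument. What each approach buys: yours is shorter and more conceptual, avoiding all word-level congruence verifications; the paper's explicit $D_u$ is more laborious but yields a concrete formula for the inverse isomorphism, which the paper reuses afterwards (Corollary~\ref{cor:consequences1}(2), the deduction algorithm in Example~\ref{ex:a4}, and the computation $\psi_R\Psi_R(e,[u]_M)=e[u]_{{\mathcal{FI}}_R(M)}$ in Proposition~\ref{prop:embedding}), so your version would need supplementing at those later points.
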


It is enough to prove that $\eta_R$ is surjective (i.e., that $E({\mathcal{FI}}_R(M)) \rtimes M$ is $M$-generated via~$\tau_R$) and  that there is an $M$-canonical morphism $\Psi_R\colon E({\mathcal{FI}}_R(M)) \rtimes M\to {\mathcal{FR}}_{R}(M)$. 

\section{Proof of the main result}  \label{sect:proof}
\subsection{The map $u\mapsto D_u$ from $FI(X)$ to $P(FR(X))$} Let $X$ be a set. We aim to construct a map from $FI(X)$ to $P(FR(X))$. As a first step, we construct a map from the {\em free involutive monoid} $F_{inv}(X)$ to $P(FR(X))$.  Recall that elements of $F_{inv}(X)$ are words over $X\cup X^{-1}$.  We provide a recursive construction as to how to associate $u\in F_{inv}(X)$ with a projection $D_u\in FR(X)$. 

If $u=1$, we put $D_u=1$. 
 Let now $|u|=n\geq 1$ and assume that for words $v$ with $|v| < n$ the elements $D_v$ are already defined. We then put:
\vspace{0.1cm}
\begin{equation}\label{eq:du}
D_u=\left\lbrace\begin{array}{ll} (D_v x)^{*}, & \text{if } u=vx, \, x\in X,\\
(x D_v)^{+}, & \text{if } u=vx^{-1}, \, x\in X. \end{array}\right.
\end{equation}
In particular, for $x\in X$, we have $D_{x}=x ^{*}$ and $D_{x^{-1}}=x^{+}$. 
\begin{remark} {\em The map $u\mapsto D_u$ is inspired by the map $\theta'\colon FG(X)\to P(M)$, where $FG(X)$ is the free group over $X$ and $M$  a restriction monoid, from \cite{FGG}. In fact, we use the same construction as in \cite{FGG}, but with different domain and range.}
\end{remark}

The following is immediate from \eqref{eq:du}.

\begin{lemma} \label{lem:aux10} Let $u,v,w \in F_{inv}(X)$.
 If $D_u=D_v$ then $D_{uw} = D_{vw}$.
\end{lemma}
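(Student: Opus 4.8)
The plan is to prove the implication by induction on the length of the word $w$, exploiting the recursive nature of the definition \eqref{eq:du}. The base case is $w$ the empty word, where $D_{uw}=D_u=D_v=D_{vw}$ trivially by hypothesis. For the inductive step, I would split $w$ as $w=w'c$ where $c$ is a single letter, i.e.\ either $c=[a]$ or $c=[a]^{-1}$ for some $a\in M$, and assume the statement holds for the shorter word $w'$, namely that $D_{uw'}=D_{vw'}$.

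The key observation driving the induction is that the recursive clause \eqref{eq:du} computes $D_{xc}$ from $D_x$ and the final letter $c$ alone: if $c=[a]$ then $D_{xc}=(D_x\lfloor a\rfloor)^*$, and if $c=[a]^{-1}$ then $D_{xc}=(\lfloor a\rfloor D_x)^+$. Applying this with $x=uw'$ and $x=vw'$, I get that $D_{uw'c}$ and $D_{vw'c}$ are obtained by applying the \emph{same} operation (determined by $c$) to $D_{uw'}$ and $D_{vw'}$ respectively. Since the inductive hypothesis gives $D_{uw'}=D_{vw'}$, these two results coincide, so $D_{uw}=D_{uw'c}=D_{vw'c}=D_{vw}$, completing the step.

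There is a small bookkeeping point to handle: the recursion \eqref{eq:du} peels off the \emph{last} letter, so one should note that $uw$ with $w=w'c$ indeed ends in $c$ and its prefix-before-$c$ is $uw'$, and similarly for $vw$; this is just associativity of concatenation in ${\mathcal F}_{inv}(M'')$ and requires no real argument. I do not expect any genuine obstacle here, since the statement is, as the paper says, immediate from the definition; the only thing to be careful about is to induct on the length of $w$ (not of $u$ or $v$) so that the recursive clause peeling off the tail of $w$ lines up with the inductive hypothesis. The whole argument fits in a few lines once the induction is set up on $|w|$.
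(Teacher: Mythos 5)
Your proposal is correct and matches the paper's intent exactly: the paper offers no written proof, stating only that the lemma ``is immediate from the definition,'' and your induction on $|w|$, peeling off the last letter of $w$ and observing that the recursion \eqref{eq:du} computes $D_{xc}$ from $D_x$ and the final letter $c$ alone, is precisely the routine argument being left to the reader.
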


Observe that, by \eqref{eq:consequences}, $D_{vxy} = ((D_vx)^*y)^* = (D_v xy)^*$. By induction, this and its dual equality, involving the operation $^+$, lead to the following equalities that hold for all $w\in X^*$:
\begin{equation}\label{eq:aux11a} 
D_{v w} = (D_v w)^* \text{ and } D_{v w^{-1}}= (w D_v)^+.
\end{equation}

Recall \cite{Lawson} that the free inverse monoid $FI(X)$ can be realized as the quotient of $F_{inv}(X)$ by the congruence $\rho $ generated by pairs
$(x x^{-1}x,  x)$,  $x\in X\cup X^{-1}$, and $(xx^{-1}yy^{-1}, yy^{-1}xx^{-1})$,  $x,y \in X\cup X^{-1}$.

\begin{lemma}\label{lem:new13} Let $u,v\in F_{inv}(X)$. Then:
\begin{enumerate}
\item $D_{uxx^{-1}xv}=D_{uxv}$ for all $x\in X\cup X^{-1}$;
\item $D_{uxx^{-1}yy^{-1}v}=D_{uyy^{-1}xx^{-1}v}$ for all $x,y\in X\cup X^{-1}$.
\end{enumerate}
\end{lemma}

\begin{proof} Due to Lemma \ref{lem:aux10} we can assume that $v=1$.

(1)  Let $x\in X$. Using \eqref{eq:fu}, we calculate:
\begin{equation*}
D_{uxx^{-1}x}= ((x (D_u x)^*)^+x)^* = (D_u x^+x)^* = (D_u x)^* = D_{ux}.
\end{equation*}
Similarly one shows that $D_{ux^{-1}xx^{-1}} = D_{ux^{-1}}$.

(2) 
Let $x,y \in X$. Applying \eqref{eq:fu}, we have $D_{uxx^{-1}} = D_u x^+$ and $D_{ux^{-1}x} = D_u x^*$.
Thus:
$$
\begin{array}{ccccccc}
D_{uxx^{-1}yy^{-1}}  & = &  D_u x^+y^+  &  = & D_uy^+x^+  &=  & D_{uyy^{-1}xx^{-1}};\\
D_{uxx^{-1}y^{-1}y} & = & D_u x^+y^* & = & D_u y^*x^+ & =  & D_{uy^{-1}yxx^{-1}}; \\
D_{ux^{-1}xy^{-1}y} & = & D_u x^*y^* & = & D_u y^*x^* & =  & D_{uy^{-1}yx^{-1}x}.
\end{array}
$$\end{proof}

\begin{corollary}\label{cor:equiv1} Let $u,v\in F_{inv}(X)$. If $u \mathrel{\rho} v$ then $D_u = D_v$.
\end{corollary}

Let $u\in FI(X)$. Due to Corollary \ref{cor:equiv1} there is a well-defined element $D_{u} \in P(FR(X))$ that is given by \eqref{eq:du}. 

\subsection{Some properties of elements $D_u$} We now establish some properties of elements $D_u$ that will be needed in the sequel.
Applying \eqref{eq:fu} and \eqref{eq:aux11a},  it follows that for any $e\in E(FI(X))$ and $u\in X^*$, we have: 
\begin{equation}\label{eq:aux_14a}
D_{u^*e}=D_{eu^*} = D_{e}u^* \text{ and } D_{u^+e}=D_{eu^+} = D_{e}u^+.
\end{equation}
Observe that $D_u=D_{u^*}$ and $D_u=D_{u^+}$ for all $u\in X^*$. Applying Lemma \ref{lem:aux10} and \eqref{eq:aux_14a}, it follows that, for all $u\in X^*$ and $e\in E(FI(X))$ we have:
\begin{equation}\label{eq:aux_14a1}
D_{ue} = D_{e}u^* \text{ and } D_{u^{-1}e} = D_{e}u^+.
\end{equation}

\begin{lemma}\label{lem:crucial} Let $u \in FI(X)$. Then $D_{u} = D_{u^*}$.
\end{lemma}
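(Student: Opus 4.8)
The plan is to prove $D_u = D_{u^{-1}u}$ by induction on the length of $u \in {\mathcal{FI}}(M'')$, using the recursive definition \eqref{eq:du} together with the key formulas \eqref{eq:aux11a}. The base case $u = 1$ is trivial, since then $u^{-1}u = 1$ as well. For the inductive step I would write $u = v[a]$ or $u = v[a]^{-1}$ for some $a \in M$ and some shorter word $v$, and compute $u^{-1}u$ explicitly, reducing $D_{u^{-1}u}$ back down to something I can recognize via the inductive hypothesis.

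Consider first the case $u = v[a]$. Then $u^{-1}u = [a]^{-1} v^{-1} v [a]$. Applying \eqref{eq:aux11a} to peel off the trailing $[a]$, I get $D_{u^{-1}u} = D_{[a]^{-1}v^{-1}v[a]} = (D_{[a]^{-1}v^{-1}v}\,\lfloor a\rfloor)^*$. Here I would want to invoke the inductive hypothesis on $v$, which gives $D_v = D_{v^{-1}v}$, and then use Lemma~\ref{lem:aux10} to replace $D_v$ by $D_{v^{-1}v}$ inside larger words, or conversely. The point is that $D_{[a]^{-1}v^{-1}v}$ should be expressible in terms of $D_{v^{-1}v} = D_v$, after which $(D_v \lfloor a\rfloor)^* = D_{v[a]} = D_u$ finishes the case. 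I would carry out the analogous computation when $u = v[a]^{-1}$, where $u^{-1}u = [a]v^{-1}v[a]^{-1}$ and I apply the $^+$-clause of \eqref{eq:du} and its consequence in \eqref{eq:aux11a}.

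The main obstacle I anticipate is correctly handling the \emph{prefix} $[a]^{-1}v^{-1}$ (respectively $[a]v^{-1}$) that appears in $u^{-1}u$ but not in $u$: the recursion \eqref{eq:du} builds $D_w$ by reading $w$ from the left, so the extra leading letters change which clause of \eqref{eq:du} governs the outermost operation. To manage this I would prove and use an auxiliary reduction: that $D_{[a]^{-1}[a]w} = D_w$ whenever the projection pattern of $w$ begins compatibly, i.e. that inserting a $[a]^{-1}[a]$-type cancellable prefix does not change the value of $D$. This is essentially a left-sided analogue of the cancellation already recorded in Case~1 of Lemma~\ref{lem:equiv1}, and combined with the commutativity established in Cases~2--4 it should let me rewrite $D_{[a]^{-1}v^{-1}v}$ as $D_{v^{-1}v}$ and close the induction. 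The bookkeeping of which of the four operation-patterns ($^*$ versus $^+$, leading versus trailing) applies at each step is where the real care is needed, but each individual rewrite is a routine application of \eqref{eq:mov_proj}, \eqref{eq:frequent}, and \eqref{eq:consequences}.
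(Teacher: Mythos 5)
Your overall strategy is the paper's: induction on $|u|$, splitting into $u=v[a]$ and $u=v[a]^{-1}$, and reducing $D_{[a]^{-1}v^{-1}v}$ to an expression in $D_v$. But the one step you correctly flag as the main obstacle is exactly where your plan breaks, and the auxiliary lemma you propose to close it is false as stated. The claim that ``inserting a $[a]^{-1}[a]$-type cancellable prefix does not change the value of $D$,'' i.e.\ $D_{[a]^{-1}[a]w}=D_w$, already fails for $w$ the empty word ($D_{[a]^{-1}[a]}=(\lfloor a\rfloor^+\lfloor a\rfloor)^*=\lfloor a\rfloor^*\neq 1$) and for $w=[b]$ (in ${\mathcal{FR}}(M')$, $(\lfloor a\rfloor^*\lfloor b\rfloor)^*\neq\lfloor b\rfloor^*$). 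Moreover, no instance of such a cancellation is applicable here: in $u^{-1}u=[a]^{-1}v^{-1}v[a]$ the leading $[a]^{-1}$ is followed by the first letter of $v^{-1}$, not by $[a]$, so the Case~1 rewrite of Lemma~\ref{lem:equiv1} never fires. Pure $\rho$-rewriting combined with commutation of idempotents also goes in circles: from $[a]^{-1}v^{-1}v \mathrel{\rho} [a]^{-1}v^{-1}v[a][a]^{-1}$ and the recursion \eqref{eq:du} one only extracts $X=X\lfloor a\rfloor^+$ for $X=D_{[a]^{-1}v^{-1}v}$, which does not determine $X$.

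The missing idea is an equality of $D$-values between words that are \emph{not} $\rho$-equivalent, namely $D_{[a]^{-1}}=\lfloor a\rfloor^+=D_{[a][a]^{-1}}$. Combined with Lemma~\ref{lem:aux10} this replaces the problematic leading letter: $D_{[a]^{-1}v^{-1}v}=D_{[a][a]^{-1}v^{-1}v}$. Now \emph{both} factors $[a][a]^{-1}$ and $v^{-1}v$ are idempotents, so they commute in ${\mathcal{FI}}(M'')$, and since $D$ descends to ${\mathcal{FI}}(M'')$ by Lemma~\ref{lem:equiv1} you may pass to $D_{v^{-1}v[a][a]^{-1}}$, which the recursion handles as a trailing-letter computation: by \eqref{eq:du}, \eqref{eq:mov_proj}, \eqref{eq:frequent} and the inductive hypothesis,
\begin{equation*}
D_{v^{-1}v[a][a]^{-1}}=\bigl(\lfloor a\rfloor (D_{v^{-1}v}\lfloor a\rfloor)^*\bigr)^+=(D_{v^{-1}v}\lfloor a\rfloor)^+=D_{v^{-1}v}\lfloor a\rfloor^+=D_v\lfloor a\rfloor^+,
\end{equation*}
whence $D_{u^{-1}u}=(D_v\lfloor a\rfloor^+\lfloor a\rfloor)^*=(D_v\lfloor a\rfloor)^*=D_{v[a]}=D_u$. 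With this pivot inserted, your induction skeleton closes (and the case $u=v[a]^{-1}$ is dual, using $D_{[a]}=\lfloor a\rfloor^*=D_{[a]^{-1}[a]}$); without it, the ``routine rewrites'' you invoke do not suffice.
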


\begin{proof}  
We apply induction on $|u|$. The case $u=1$ is trivial. 
Let $n\geq 1$ and assume that in the case where $|u|<n$ the claim is proved.  Let $u=vx$,  $x\in X\cup X^{-1}$. 
If $x\in X$ we have:
\begin{align*}
D_{u^*} = D_{x^{-1}v^*x} & = (D_{x^{-1}v^*}x)^* & (\text{by } \eqref{eq:du})\\
& = (D_{v^*}x^+x)^* & (\text{by } \eqref{eq:aux_14a1})\\
& = (D_vx^+x)^* & (\text{by the induction hypothesis})\\
& = (D_vx)^* = D_{vx} = D_u. & (\text{applying } \eqref{eq:du})
\end{align*}
The case where $x\in X^{-1}$ is treated similarly.
\end{proof}

\begin{lemma}\label{lem:idempotents} Let $v\in FI(X)$ and $e\in E(FI(X))$. Then $D_{v}D_e=D_{ve}$.
\end{lemma}

\begin{proof} Let $|e|$ denote the minimal length of a word $u$ over $X\cup X^{-1}$ such that $e=u^{-1}u$ in $FI(X)$. We argue by induction on $n=|e|$. For $n=0$ the claim clearly holds. 

Let $n\geq 1$ and assume that if $|e|<n$, the claim holds. We prove the claim for $|e|=n$. Let $e=u^{-1}u$. 
Suppose that $u=wx$ where $x\in X$.  Then, in view of Lemma \ref{lem:crucial} and \eqref{eq:frequent}, we have:
$$
D_vD_{(wx)^*} = D_vD_{wx} = D_v (D_wx)^* = (D_wx)^*D_v = (D_wxD_v)^*.
$$
On the other hand,
\begin{align*}
D_{v(wx)^*} & = D_{vx^{-1}w^*x} = (D_{vx^{-1}w^*}x)^* & (\text{by } \eqref{eq:du})\\
& = (D_{vx^{-1}}D_{w^*}x)^* & (\text{by the induction hypothesis})\\
& = ((xD_{v})^+D_{w^*}x)^* = (D_{w^*}(xD_{v})^+x)^* & (\text{applying } \eqref{eq:du})\\
& = (D_{w}xD_{v})^*, & (\text{by } \eqref{eq:mov_proj} \text{ and Lemma } \ref{lem:crucial})
\end{align*}
so that $D_vD_{(wx)^*} = D_{v(wx)^*}$. The case where $u=wx^{-1}$, $x\in X$, is treated similarly.
\end{proof}

Let $\widetilde{\psi}\colon FR(X)\to FI(X)$ be the $X$-canonical projection map.

\begin{lemma}\label{lem:d_u}
Let $u\in FI(X)$. Then $\widetilde{\psi}(D_u)=u^{*}$.
\end{lemma}

\begin{proof} We apply induction on $|u|$. If $|u|=0$, the statement is obvious. We assume that $\widetilde{\psi}(D_v)=v^{*}$ and let $x\in X$. If $u=vx$ we have $\widetilde{\psi}(D_u)=(v^*x)^{*} = x^{-1} v^{-1}v x = (vx)^{*}$, and if $u=vx^{-1}$ we have $\widetilde{\psi}(D_u) = (x v^*)^{+} = xv^{-1}vx^{-1} = (vx^{-1})^{*}$, as needed.
\end{proof}

It is useful to think of $\widetilde{\psi}(D_u)$ as of $(\cdot\, , ^*, ^+,1)$-decomposition of $u^*\in FI(X)$  over $X$. In particular, we have shown that such a decomposition always exists. 

\begin{example} {\em For $u=xy^{-1}yz$ we have $D_u =  (((yx^{*})^{+}y)^{*})z)^{*} \in P(FR(X))$ and thus $u^*= \widetilde{\psi}(D_u) =  (((yx^{*})^{+}y)^{*})z)^{*}\in E(FI(X))$.}
\end{example}

\subsection{The induced map $u\mapsto D_u$ from ${\mathcal{FI}}(M)$ to $P({\mathcal{FR}}(M))$}
Recall that  ${\mathcal{FI}}(M)$ is a quotient of $FI [M]$ by  the congruence $\widetilde{\gamma}$ (see Definition \ref{def:expansion_restr}). Likewise, ${\mathcal{FR}}(M)$ is a quotient of $FR\lfloor M \rfloor$ by the congruence $\widetilde{\delta}$ (see Definition \ref{def:expansion_inv}).  

\begin{proposition}\label{prop:crucial}
Let $u,v \in FI [M]$. If $u \mathrel{\widetilde{\gamma}} v$ then $D_u \mathrel{\widetilde{\delta}} D_v$.
\end{proposition}

\begin{proof} It sufficies to assume that $u$ and $v$ are ${\widetilde{\gamma}}$-equivalent in one step, that is, via a relation $[m][n]=[mn]([m][n])^{*}$ or its inverse relation $([m][n])^{-1}=([mn]([m][n])^{*})^{-1}$.   
We consider two cases.

{\em Case 1.} Let $u=p[m][n]q$ and $v=p[mn]([m][n])^{*}q$. 
We show that 
\begin{equation} \label{eq:claim} D_{u} \mathrel{{\widetilde{\delta}}} D_{v} \text{ and }  \, D_{u^{-1}} \mathrel{{\widetilde{\delta}}} D_{v^{-1}}.
\end{equation}
 We first show that $D_{u} \mathrel{\widetilde{\delta}} D_{v}$. Applying \eqref{eq:frequent}, we have:
$$
D_{p[m][n]}  = (D_p \lfloor m\rfloor \lfloor n\rfloor)^* \mathrel{\widetilde{\delta}} (D_p\lfloor mn \rfloor(\lfloor m\rfloor\lfloor n\rfloor)^*)^* = (D_p\lfloor mn\rfloor)^*(\lfloor m\rfloor\lfloor n\rfloor)^*.
$$
On the other hand, applying Lemma \ref{lem:idempotents} and \eqref{eq:aux_14a} and \eqref{eq:aux11a}, we have:
\begin{equation*}
D_{p[mn]([m][n])^{*}} = D_{p[mn]}D_{([m][n])^*} = D_{p[mn]}(\lfloor m\rfloor\lfloor n\rfloor)^* = (D_p\lfloor mn\rfloor)^*(\lfloor m\rfloor\lfloor n\rfloor)^*,
\end{equation*}
so that $D_{p[m][n]}  \mathrel{\widetilde{\delta}} D_{p[mn]([m][n])^{*}}$ and thus, by Lemma \ref{lem:aux10}, $D_{u} \mathrel{\widetilde{\delta}} D_{v}$. 

We now show that $D_{u^{-1}}  \mathrel{\widetilde{\delta}} D_{v^{-1}}$. We have $u^{-1}=q^{-1}[n]^{-1}[m]^{-1}p^{-1}$ and $v^{-1} =  q^{-1}([m][n])^{*}[mn]^{-1} p^{-1}$ which is convenient to rewrite as
$v^{-1} = q^{-1}[mn]^{-1}([m][n])^{+} p^{-1}$. Similarly as above, one shows that each of the elements   
$D_{q^{-1}[n]^{-1}[m]^{-1}}$ and $D_{q^{-1}[mn]^{-1}([m][n])^{+}}$ is ${\widetilde{\delta}}$-related to  $(\lfloor m \rfloor \lfloor n \rfloor)^+ (\lfloor mn \rfloor D_{q^{-1}})^+$. It follows that $D_{u^{-1}} \mathrel{\widetilde{\delta}} D_{v^{-1}}$. 

{\em Case 2.} Let $u=p([m][n])^{-1}q$ and $v=p([mn]([m][n])^{*})^{-1}q$. Then
$u^{-1} = q^{-1} [m][n] p^{-1}$ and $v^{-1} = q^{-1}[mn]([m][n])^{*}p^{-1}$, and this case is reduced to the previous one. 
\end{proof}

Hence there is a well-defined map ${\mathcal{FI}}(M)\to P({\mathcal{FR}}(M))$, given by $[e]_{\widetilde{\gamma}}\mapsto [D_e]_{\widetilde{\delta}}$.

\subsection{Proof of Theorem \ref{th:main}: the case where $R=\varnothing$}
We are now ready to complete the proof of Theorem \ref{th:main} in the special case where $R=\varnothing$. We denote $\eta_{\varnothing}$ and $\tau_{\varnothing}$  by $\eta$ and $\tau$, respectively. We aim to prove that
\begin{equation*}
\eta\colon {\mathcal{FR}}(M) \to E({\mathcal{FI}}(M)) \rtimes M
\end{equation*} 
is an isomorphism.

\begin{proposition} \label{prop:generation} The morphism $\eta$  is surjective.  That is,
 $E({\mathcal{FI}}(M)) \rtimes M$ is generated (as a restriction monoid) by the set $\tau(M) = \{([m]^{+}, m)\colon m\in M\}$. 
\end{proposition}

\begin{proof} Let $(e, m) \in E({\mathcal{FI}}(M)) \rtimes M$. We show that $(e,m)\in \langle \tau(M)\rangle$ with the help of induction on the smallest length $n$ of a word $u$ over $[M]\cup [M]^{-1}$ such that the value of $u^*=u^{-1}u$ in ${\mathcal{FI}}(M)$ is $e$. If $n=0$, we have $e=1$, and thus $(e,m)=(1,1)$ which is the nullary operation in $\langle \tau(M)\rangle$. Hence the basis of the induction is established.
Assume that $n\geq 1$ and that the claim is proved for $e$ which can be written as $e=u^*$ where $|u|\leq n-1$. Let $e=v^*$ with $|v|=n$. If $v=u[a]$,  $a\in M$,  applying \eqref{eq:prod_operations} we have:
$$(e,1) = ((u[a])^{*},1)= (u^*[a]^{+}, a)^* = ((u^*,1)([a]^+,a))^* \in  \langle \tau(M) \rangle.$$
If $v=u[a]^{-1}$, $a\in M$, we similarly have:
$$
(e,1)= ((u[a]^{-1})^{*}, 1) = (([a]u^*)^{+}, 1) = (([a]^{+}, a)(u^*, 1))^+ \in \langle \tau(M) \rangle.
$$
Now, $(e,m) = (e,1) ([m]^{+}, m) \in \langle \tau(M) \rangle$ for any $m$ satisfying \mbox{$e\leq [m]^{+}$.}
\end{proof}

We define a map 
\begin{equation}\label{eq:psi}
\Psi \colon E({\mathcal{FI}}(M))  \rtimes M \to {\mathcal{FR}}(M), \,\, \Psi (e,m) = D_e \lfloor m\rfloor.
\end{equation} 
We verify that $\Psi$ is a morphism of restriction monoids. Clearly, it respects the identity element.
Verifications that $\Psi$ respects the unary operations $^*$ and $^+$  amount to showing the equalities $D_{(e[m])^{*}} = (D_e\lfloor m\rfloor)^*$ and $D_e=(D_e\lfloor m\rfloor)^+$, respectively, for any $m\in M$ and any $e\in E({\mathcal{FI}}(M))$ satisfying $e\leq [m]^{+}$. They  follow applying \eqref{eq:du} and Lemma \ref{lem:crucial}:  
$$(D_e\lfloor m\rfloor)^* = D_{e[m]} = D_{(e[m])^{*}}, \, (D_e\lfloor m\rfloor)^+ = (D_e\lfloor m\rfloor^+)^+ = D_e\lfloor m\rfloor^+ = D_{e[m]^{+}} = D_e.$$

It remains to verify that $\Psi$ preserves the multiplication. We have: 
\begin{align*}
\Psi((e,m)(f,n)) & = \Psi(e([m]f)^{+}, mn) & (\text{by } \eqref{eq:prod_operations}) \\ 
 & = \,\, D_{e([m]f)^{+}}\lfloor mn\rfloor & (\text{by the definition of }\Psi)\\
 & = \,\, D_{e}D_{([m]f)^{+}}\lfloor mn\rfloor & (\text{by Lemma \ref{lem:idempotents}})\\
 & = \,\, D_{e}D_{(f[m]^{-1})^{*}}\lfloor mn\rfloor & (\text{since } ([m]f)^{+}=(f[m]^{-1})^{*})\\
 & = \,\, D_{e}D_{f[m]^{-1}}\lfloor mn\rfloor & (\text{by Lemma \ref{lem:crucial}})\\
& = \,\,  D_{e}(\lfloor m\rfloor D_f)^+\lfloor mn\rfloor.  & (\text{by  }\eqref{eq:du})
\end{align*}

On the other hand, $$\Psi((e,m))\Psi((f,n)) = D_e \lfloor m\rfloor D_f \lfloor n\rfloor = D_e  (\lfloor m\rfloor D_f)^+ (\lfloor m\rfloor \lfloor n\rfloor)^+ \lfloor mn\rfloor.$$  Therefore, it is enough to show that $(\lfloor m\rfloor D_f)^+= (\lfloor m\rfloor D_f)^+(\lfloor m\rfloor \lfloor n\rfloor)^+$. We calculate:
\begin{align*}
(\lfloor m\rfloor D_f)^+&(\lfloor m\rfloor \lfloor n\rfloor)^+  = \,\, D_{f[m]^{-1}} D_{([m][n])^{+}}& (\text{by } \eqref{eq:aux_14a} \text{ with } e=1)\\
 & = \,\, D_{f[m]^{-1}([m][n])^{+}}  & (\text{by Lemma } \ref{lem:idempotents})\\
&  = \,\, D_{f[n]^+[m]^{-1}} & (\text{since }[m]^{-1}([m][n])^{+}  = [n]^+[m]^{-1})\\ 
& = \,\,  D_{f[m]^{-1}} =  (\lfloor m\rfloor D_f)^+, & (\text{since } f\leq [n]^{+} \text{ and  by } \eqref{eq:du})
\end{align*}
as needed.  

Let $m\in M$. We show that $\Psi\eta\lfloor m \rfloor = \lfloor m\rfloor$. This is equivalent to $\Psi([m]^{+}, m) = \lfloor m\rfloor$, which rewrites to $D_{[m]^+} \lfloor m\rfloor = \lfloor m\rfloor$. Because $D_{[m]^+} \lfloor m\rfloor \leq \lfloor m\rfloor$ and $(D_{[m]^+} \lfloor m\rfloor)^* = D_{[m]^+[m]} = D_{[m]} = \lfloor m\rfloor^*$, we are done.  We have shown that $\eta$ is an isomorphism. 

Recall from \eqref{eq:psir} that $\psi_{\varnothing} \colon {\mathcal{FR}}(M) \to {\mathcal{FI}}(M)$ is the $M$-canonical morphism of restriction monoids. In the sequel, we denote $\psi_{\varnothing}$ just by $\psi$.

\begin{corollary}\label{cor:consequences1a} 
The map $e\mapsto D_e$, $e\in E({\mathcal{FI}}(M))$, establishes an isomorphism between the semilattices $E({\mathcal{FI}}(M))$ and $P({\mathcal{FR}}(M))$. The inverse isomorphism is the map $\psi\vert_{P({\mathcal{FR}}(M))} \colon P({\mathcal{FR}}(M))\to E({\mathcal{FI}}(M))$. In particular,
$$\psi(D_e)=e \, \text{ for all } \, e\in E({\mathcal{FI}}(M)) \text{ and}
$$
$$D_{\psi(e)}=e \, \text{ for all } \, e\in P({\mathcal{FR}}(M)).  
$$
\end{corollary} 

\begin{proof} We first note that the semilattice $E({\mathcal{FI}}(M))$ is isomorphic to the semilattice $P(E({\mathcal{FI}}(M)) \rtimes M)$ via the map $e\mapsto (e,1)$ (see Subsection \ref{subs:structure_proper}). Because $\Psi$ is an isomorphism and $\Psi(e,1) = D_e$, it follows that the map $e\mapsto (e,1) \stackrel{\Psi}{\mapsto} D_e$ is an isomorphism from $E({\mathcal{FI}}(M))$ onto $P({\mathcal{FR}}(M))$. Furthermore, by Lemma \ref{lem:d_u} and Proposition~\ref{prop:crucial} we have $\psi(D_e) = e$ for all  $e\in E({\mathcal{FI}}(M))$. The statement follows.
\end{proof}

\subsection{Proof of Theorem \ref{th:main}: the case of an arbitrary $R$} We now proceed to the proof of Theorem~\ref{th:main} for the case where $R$ is an arbitrary set of admissible identities. 

\begin{proposition}\label{prop:crucial1a} Let $R$ be a set of admissible identities and $u,v \in {\mathcal{FI}}(M)$. If $u \mathrel{\gamma_R} v$ then $D_u \mathrel{\delta_R} D_v$.
\end{proposition}

\begin{proof} If $R=\varnothing$, the statement is trivial. We thus assume that $R\neq \varnothing$. It suffices to consider the case where $u$ and $v$ are $\gamma_R$-equivalent in one step. In view of Proposition \ref{prop:identities10}, we can assume that $u$ in $v$ are connected by a relation $w\cdot e = w\cdot f$, where $w\in X^*$ and $e,f \in  P(FR(X^*))$, or by its inverse. 

{\em Case 1.} Let first $u=p\,[w\cdot e]_{\iota_{{\mathcal{FI}}(M)},f}\,q$ and $v= p\,[w\cdot f]_{\iota_{{\mathcal{FI}}(M)},f}\,q$ where $f\colon X\to M$ is a map and $p,q\in {\mathcal{FI}}(M)$. Since $\iota_{{\mathcal{FI}}(M)}=\psi\iota_{{\mathcal{FR}}(M)}$ with $\psi$ being a morphism, Lemma~\ref{lem:useful26} implies that 
\begin{equation}\label{eq:aux7m}
[e]_{\iota_{{\mathcal{FI}}(M)},f} = \psi([e]_{\iota_{{\mathcal{FR}}(M)},f}).
\end{equation} We observe that
\begin{align*}
D_{p[w\cdot e]_{\iota_{{\mathcal{FI}}(M)},f}} & = D_{p[w]_{\iota_{{\mathcal{FI}}(M)},f}[e]_{\iota_{{\mathcal{FI}}(M)},f}} & (\text{by Definition } \ref{def:value})\\
& = D_{p[w]_{\iota_{{\mathcal{FI}}(M)},f}} D_{{[e]_{\iota_{{\mathcal{FI}}(M)},f}}} & (\text{by  Lemma } \ref{lem:idempotents})\\
& = D_{p[\hat{f}(w)]} D_{\psi([e]_{\iota_{{\mathcal{FR}}(M)},f})} & (\text{by Definition } \ref{def:value} \text{ and } \eqref{eq:aux7m})\\
& = (D_{p}\lfloor \hat{f}(w)\rfloor)^* [e]_{\iota_{{\mathcal{FR}}(M)},f} & (\text{by } \eqref{eq:du} \text{ and  Corollary } \ref{cor:consequences1a})\\
& = (D_{p}\lfloor \hat{f}(w)\rfloor [e]_{\iota_{{\mathcal{FR}}(M)},f})^* & (\text{by } \eqref{eq:frequent})\\
& = (D_{p}[w\cdot e]_{\iota_{{\mathcal{FR}}(M)},f})^*. & (\text{by Definition } \ref{def:value})
\end{align*}
Likewise, $D_{p[w\cdot f]_{\iota_{{\mathcal{FI}}(M)},f}} = (D_{p}[w\cdot f]_{\iota_{{\mathcal{FR}}(M)},f})^*$. Therefore, in view of Lemma \ref{lem:aux10}, we have $D_u \mathrel{\delta_R} D_v$, as needed. Similarly, one shows that $D_{u^{-1}} \mathrel{\delta_R} D_{v^{-1}}$.

{\em Case 2.} Let $u$ and $v$ be connected by the relation $(w\cdot e)^{-1} = (w\cdot f)^{-1}$. Then $u^{-1}$ and $v^{-1}$ are connected by the relation $w\cdot e = w\cdot f$, which reduces this case to the previous one.
\end{proof}
Therefore, the map ${\mathcal{FI}}(M) \to P({\mathcal{FR}}(M))$, $u\mapsto D_u$, gives rise to the induced map  ${\mathcal{FI}}_{R}(M) \to P({\mathcal{FR}}_{R}(M))$, also denoted by $u\mapsto D_u$.

The quotient map $\gamma_R^{\natural} \colon {\mathcal{FI}}(M) \to {\mathcal{FI}}_{R}(M)$, in turn, induces a quotient map of restriction monoids $E({\mathcal{FI}}(M))\rtimes M \to   E({\mathcal{FI}}_{R}(M)) \rtimes M$, given by $(e, m)\mapsto (\gamma_R^{\natural}(e), m)$.
It follows that there is an analogue of Proposition \ref{prop:generation} for any set $R$ of admissible identities: $E({\mathcal{FI}}_{R}(M))\rtimes M$ is $M$-generated via the map $\tau_R$. Consequently,  the morphism $\Psi$ of \eqref{eq:psi} induces the morphism
\begin{equation}\label{eq:psi10}
\Psi_R \colon E({\mathcal{FI}}_{R}(M))\rtimes M \to {\mathcal{FR}}_{R}(M), \,\, \Psi_R (e,m) = D_e \lfloor m\rfloor,
\end{equation}
that satisfies  $\Psi_R\eta_R = {\mathrm{id}}_{{\mathcal{FR}}_{R}(M)}$ (in particular, it is an isomorphism).  This completes the proof of Theorem~\ref{th:main}. 

The following consequence of Theorem~\ref{th:main} extends Corollary \ref{cor:consequences1a}.

\begin{corollary}\label{cor:consequences1c} Let $R$ be a set of admissible identities.
The map $e\mapsto D_e$, $e\in E({\mathcal{FI}}_R(M))$, establishes an isomorphism between the semilattices $E({\mathcal{FI}}_R(M))$ and $P({\mathcal{FR}}_R(M))$. The inverse isomorphism is given by the map $$\psi_R\vert_{P({\mathcal{FR}}_R(M))} \colon P({\mathcal{FR}}_R(M))\to E({\mathcal{FI}}_R(M)).$$ In particular,
$$\psi_R(D_e)=e \, \text{ for all } \, e\in E({\mathcal{FI}}_R(M)) \text{ and}
$$
$$D_{\psi_R(e)}=e \, \text{ for all } \, e\in P({\mathcal{FR}}_R(M)).  
$$
\end{corollary}

\begin{corollary} Let $R$ be a set of admissible identities and $M$ be a monoid. If the word problem in ${\mathcal{FI}}_{R}(M)$ is decidable, so is the word problem in ${\mathcal{FR}}_{R}(M)$.
\end{corollary}
\begin{proof}
Applying \eqref{eq:mov_proj} and \eqref{eq:e1}, we can rewrite any element of ${\mathcal{FR}}_{R}(M)$ in the form $e\lfloor m\rfloor$, where $e\in P({\mathcal{FR}}_{R}(M))$ and $m\in M$. Since $e=D_{\psi_R(e)}$, we have $e\lfloor m\rfloor=D_{\psi_R(e)}\lfloor m\rfloor$.  Theorem \ref{th:main} implies that $D_{\psi_R(e)}\lfloor m\rfloor = D_{\psi_R(f)} \lfloor m\rfloor$ if and only if $\psi_R(e)=\psi_R(f)$ in ${\mathcal{FI}}_{R}(M)$. If we know the deduction of $\psi_R(e)=\psi_R(f)$ in ${\mathcal{FI}}_{R}(M)$, proofs of Corollary~\ref{cor:equiv1}, Proposition~\ref{prop:crucial} and Proposition~\ref{prop:crucial1a} provide an algorithm  how to pass from $e=D_{\psi_R(e)}$ to $f=D_{\psi_R(f)}$ in ${\mathcal{FR}}_{R}(M)$.
\end{proof}

We conclude this section with an example demonstrating how the known structure of ${\mathcal{FI}}_{R}(M)$ sheds light onto the structure of ${\mathcal{FR}}_{R}(M)$.

\begin{example} {\em Let $M=\langle a\,  | \, a^2=a^3\rangle$ and $R=R_m$. The structure of ${\mathcal{FI}}_{R_m}(M)$ in this example is easily understood: it is the quotient of the free $[a]$-generated  inverse monoid by the relation $[a]^2=[a]^3$. In particular, its idempotents are described by one-rooted Munn trees (\cite{Munn}) over $[a]$ with at most two edges. It is thus easy to check that $$E({\mathcal{FI}}_{R_m}(M))=\{1, [a]^{*}, [a]^{+}, [a]^{*}[a]^{+}, ([a]^2)^{*}, ([a]^2)^{+}\}.$$

By Theorem \ref{th:main} $\eta_{R_m}\colon {\mathcal{FR}}_{R_m}(M) \to E({\mathcal{FI}}_{R_m}(M))\rtimes M$ is an isomorphism, thus one gets a handle on the structure of ${\mathcal{FR}}_{R_m}(M)$.
For example, because $$
\eta_{R_m}((\lfloor a \rfloor^{*} \lfloor a\rfloor)^{+}) = \eta_{R_m}((\lfloor a \rfloor^{*} \lfloor a\rfloor)^{*}) =  ([a]^*[a]^+,1),
$$
we have $(\lfloor a \rfloor^{*} \lfloor a\rfloor)^{+} = (\lfloor a\rfloor^{*}\lfloor a\rfloor)^{*}$.
On the other hand,
\begin{equation}\label{eq:distinct}
\lfloor a\rfloor^*\lfloor a\rfloor \neq (\lfloor a\rfloor^*\lfloor a\rfloor)^+,
\end{equation} because
$$
\eta_{R_m}(\lfloor a\rfloor^*\lfloor a\rfloor)  = ([a]^+,a)^*([a]^+,a) = ([a]^*[a]^+,a) \neq ([a]^*[a]^+,1) = \eta_{R_m}((\lfloor a \rfloor^{*} \lfloor a\rfloor)^{+}).
$$
In contrast to \eqref{eq:distinct}, in ${\mathcal{FI}}_{R_m}(M)$ we have:
\begin{equation}\label{eq:distinct1}
[a]^*[a] = ([a]^*[a])^+.
\end{equation}
Indeed, 
$$([a]^{*}[a])([a]^{*}[a])([a]^{*}[a])=[a]^{*}[a]^3 = [a]^{*}[a]^2 = [a]^{-1}[a]^3 =  [a]^{-1}[a]^2 = [a]^{*}[a],
$$
so that $([a]^{*}[a])^{-1}=[a]^{*}[a]$ by the uniqueness of the inverse element, which implies \eqref{eq:distinct1}. This means that the map $\psi_{R_m}$ in this example is not injective (cf. Proposition~\ref{prop:embedding}).
} 
\end{example}

\section{Special cases}\label{sect:special}

Let $M$ be a monoid and $R$ a set of admissible identities. 

\subsection{The case where $M$ embeds into a group} 
For a monoid $M$, we put $\lceil M \rceil=\{\lceil m \rceil \colon m\in M\}$. 
The free group $FG \lceil M \rceil$ is $M$-generated via the map 
$\iota_{FG \lceil M \rceil}\colon M \to FG \lceil M \rceil$,  $m\mapsto \lceil m \rceil$. 

We define ${\mathcal{FG}}(M)$ to be the quotient of $FG \lceil M \rceil$ by the congruence generated by the relations
\begin{equation}\label{eq:em19g}
\lceil  1\rceil  = 1;
\end{equation} 
\begin{equation}\label{eq:e1g}
\lceil  m\rceil  \lceil  n\rceil  =\lceil  mn\rceil  (\lceil  m\rceil  \lceil  n\rceil )^*, \,\,\, m,n\in M.
\end{equation}
Since $FG \lceil M \rceil$ is a group,  $(\lceil  m\rceil  \lceil  n\rceil )^*=1$, so that \eqref{eq:e1g} is equivalent to $\lceil  m\rceil  \lceil  n\rceil  =\lceil  mn\rceil $, $m,n\in M$. It follows that the $M$-canonical map $\iota_{{\mathcal{FG}}(M)}\colon M\to {\mathcal{FG}}(M)$ is a morphism. In particular, $\iota_{{\mathcal{FG}}(M)}$ satisfies any admissible identity. Thus, ${\mathcal{FG}}_R(M)={\mathcal{FG}}(M)$ for any set of admissible identities (where  ${\mathcal{FG}}_R(M)$ is defined along the lines of Definition  \ref{def:expansion_restr}).
The group ${\mathcal{FG}}(M)$ is known as the
 {\em free group over the monoid} $M$ \cite{CP} (also called the {\em fundamental group}  of $M$ \cite{MP}). 
 
\begin{lemma}\label{lem:quotient}  Let $R$ be a set of admissible identities. The maximum group quotient of  ${\mathcal{FI}}_R(M)$ is isomorphic to ${\mathcal{FG}}(M)$.
\end{lemma}

\begin{proof} The definitions imply that ${\mathcal{FG}}(M)$ is a quotient of ${\mathcal{FI}}_R(M)$. Let $\pi$ be the corresponding congruence on ${\mathcal{FI}}_R(M)$. 
We assume that $u,v\in {\mathcal{FI}}_R(M)$ are such that $u \mathrel{\pi} v$ and show that $u\mathrel{\sigma} v$.
Since $\pi$ is generated by the relations $[m][m]^{-1}=1$ and $[m][n] =[mn]$, $m,n\in M$, it is enough to consider the following cases.

{\em Case 1.} We assume that $u = p[m][m]^{-1}q$ and $v = pq$. Because $[m][m]^{-1}=[m]^{+}\leq 1$,  it follows that  $u\leq v$, so that  $u\mathrel{\sigma} v$.

{\em Case 2.} We assume that $u = p[m][n]q$ and $v = p[mn]q$. Because $[m][n]  \leq [mn]$,  it follows that  $u\leq v$, so that $u\mathrel{\sigma} v$. 

We have proved that $\pi\subseteq \sigma$, so that $\pi = \sigma$, by the minimality of $\sigma$.
\end{proof}

\begin{proposition}\label{prop:embedding} Let $M$ be a monoid and $R$ a set of admissible identities. The following statements are equivalent:
\begin{enumerate}
\item $M$ embeds into a group.
\item The morphism $\psi_R\colon {\mathcal{FR}}_{R}(M)\to {\mathcal{FI}}_R(M)$ of \eqref{eq:psir} is injective.
\end{enumerate}
\end{proposition}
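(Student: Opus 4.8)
The plan is to transport the whole question through the isomorphism $\widetilde{\eta}_R$ of Theorem~\ref{th:main}, which identifies ${\mathcal{FR}}_R(M)$ with ${\mathcal{M}}(M, E({\mathcal{FI}}_R(M)))$ via $\lfloor m\rfloor \mapsto ([m]^{\varoplus}, m)$, and to make $\psi_R$ explicit under this identification. Since $\widetilde{\eta}_R^{-1}=\Psi_R$ sends $(e,m)$ to $D_e\lfloor m\rfloor$, and since by Corollary~\ref{cor:consequences1} the map $\psi_R$ restricts to an isomorphism $P({\mathcal{FR}}_R(M))\to E({\mathcal{FI}}_R(M))$ inverse to $e\mapsto D_e$, we obtain $\psi_R(D_e\lfloor m\rfloor)=\psi_R(D_e)\,[m]=e[m]$. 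Thus, under the identification, $\psi_R$ is simply the map $(e,m)\mapsto e[m]$ from ${\mathcal{M}}(M, E({\mathcal{FI}}_R(M)))$ to ${\mathcal{FI}}_R(M)$, and the task becomes to decide exactly when this concrete map is injective.

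The first thing I would record is that the $E({\mathcal{FI}}_R(M))$-coordinate is always recoverable from the image. Because an element $(e,m)$ of the partial action product satisfies $e\le [m]^{\varoplus}$, a short computation gives $(e[m])^{\varoplus}=e[m]^{\varoplus}e=e[m]^{\varoplus}=e$. Hence $\psi_R(e,m)=\psi_R(f,n)$ already forces $e=f$. It follows that $\psi_R$ fails to be injective precisely when there exist $m\ne n$ in $M$ and an idempotent $e\in E({\mathcal{FI}}_R(M))$ with $e\le [m]^{\varoplus}$, $e\le [n]^{\varoplus}$ and $e[m]=e[n]$.

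The next step is to identify the existence of such a witnessing idempotent with the single condition $[m]\mathrel{\sigma}[n]$ in ${\mathcal{FI}}_R(M)$. One direction is immediate, since $e[m]=e[n]$ with $e$ idempotent yields $[m]\mathrel{\sigma}[n]$ at once. For the converse, starting from an idempotent $e_0$ with $e_0[m]=e_0[n]$, I would normalize it: taking $\varoplus$ of this equality gives $e_0[m]^{\varoplus}=e_0[n]^{\varoplus}$, so $e=e_0[m]^{\varoplus}$ lies below both $[m]^{\varoplus}$ and $[n]^{\varoplus}$, while $e[m]=e_0[m]=e_0[n]=e[n]$ is preserved. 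This normalization of the witness is the only genuinely fiddly point; everything on either side of it is bookkeeping through identifications already in place.

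Finally I would connect $\sigma$ with embeddability into a group. By the preceding Lemma the maximum group quotient of ${\mathcal{FI}}_R(M)$ is ${\mathcal{FG}}(M)$, with $\sigma$ the associated congruence, so $[m]\mathrel{\sigma}[n]$ holds if and only if $\lceil m\rceil=\lceil n\rceil$ in ${\mathcal{FG}}(M)$. Stringing the reductions together, $\psi_R$ is injective if and only if the canonical map $m\mapsto\lceil m\rceil$ from $M$ to ${\mathcal{FG}}(M)$ is injective. The proof then closes with the universal property of ${\mathcal{FG}}(M)$: if $M$ embeds into a group $G$, that embedding factors through ${\mathcal{FG}}(M)$, forcing $m\mapsto\lceil m\rceil$ to be injective; conversely, injectivity of this map embeds $M$ into the group ${\mathcal{FG}}(M)$ itself. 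This establishes the equivalence of statements (1) and (2).
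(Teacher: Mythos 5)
Your proof is correct and takes essentially the same route as the paper's: both transport the problem through Theorem \ref{th:main} and Corollary \ref{cor:consequences1} to the map $(e,m)\mapsto e[m]$, recover the idempotent coordinate via $(e[m])^{\varoplus}=e$, and reduce injectivity of $\psi_R$ to injectivity of $M\to {\mathcal{FG}}(M)$ using the lemma that ${\mathcal{FG}}(M)$ is the maximum group quotient of ${\mathcal{FI}}_R(M)$. Your chain-of-equivalences packaging, with the normalization $e_0\mapsto e_0[m]^{\varoplus}$ of the $\sigma$-witness, is just a tidier rendering of the paper's two separate implications (its ``we may assume that $e\leq [u]^{\varoplus}[v]^{\varoplus}$'' is the same normalization in disguise).
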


\begin{proof} 
The definition of ${\mathcal{FG}}(M)$ implies that $M$ embeds into a group if and only if $M$ embeds into ${\mathcal{FG}}(M)$, that is,  the map $\iota_{{\mathcal{FG}}(M)}$ is injective.

(1) $\Rightarrow$ (2)  We assume that the map $\iota_{{\mathcal{FG}}(M)}$ is injective. By the proof of Theorem~\ref{th:main} (see \eqref{eq:psi10}), any element of ${\mathcal{FR}}_{R}(M)$ can be written in the form $D_e \lfloor m\rfloor$ where $e\leq [m]^{+}$. Let $\psi_R(D_e \lfloor m\rfloor) = \psi_R(D_f \lfloor n\rfloor)$ where $e\leq [m]^{+}$ and $f\leq [n]^{+}$. In view of Corollary \ref{cor:consequences1c} this yields
$e[m] = f[n]$. Since $e[m] \mathrel{\sigma} f[n]$, we obtain $\lceil m\rceil = \lceil n\rceil$, by Lemma~\ref{lem:quotient}. The assumption implies that $m = n$ whence $e[m]=f[m]$. But then $e=(e[m])^+=(f[m])^+=f$.  It follows  that $D_e \lfloor m\rfloor = D_f \lfloor n\rfloor$, so that the map $\psi_R$ is injective.

(2) $\Rightarrow$ (1) We assume that the map $\psi_R$ is injective. Let $m,n\in M$ be such that $\lceil m\rceil = \lceil n\rceil$. Then $[m] \mathrel{\sigma} [n]$ in ${\mathcal{FI}}_R(M)$. By the definition of $\sigma$ there are $e,f\in E({\mathcal{FI}}_R(M))$ such that $e[m] = f[n]$. Without loss of generality, we assume that $e\leq [m]^+$ and $f\leq [n]^+$. Observe that $e[m] = \psi_R\Psi_R(e,m)$ and, similarly, $f[n] = \psi_R\Psi_R(f,n)$. Since $\psi_R$ is injective and $\Psi_R$ is bijective, it follows that $(e,m)=(f,n)$ which yields $m=n$, as needed.
\end{proof}

We obtain the following result.

\begin{theorem} \label{th:consequences2} Let $M$ be a monoid that embeds into a group and $R$ a set of admissible identities. Then ${\mathcal{FR}}_{R}(M)$ is isomorphic to the restriction submonoid of ${\mathcal{FI}}_R(M)$ consisting of elements $e[m]$ where $e\in E({\mathcal{FI}}_R(M))$ and $m\in M$. This is precisely the $[M]$-generated restriction submonoid of ${\mathcal{FI}}_R(M)$. As a monoid, ${\mathcal{FR}}_R(M)$ is generated by  $E({\mathcal{FI}}_R(M))$ and $[M]$.
\end{theorem}

We remark that under the assumption of Theorem  \ref{th:consequences2}, ${\mathcal{FR}}_R(M)$ is a {\em full} restriction submonoid of ${\mathcal{FI}}_R(M)$, that is, it contains all idempotents of ${\mathcal{FI}}_R(M)$.

\begin{example} {\em Let $M=A^*$.  As already noted, ${\mathcal{FR}}_{R_m}(A^*)$ is isomorphic to $FR(A)$ and ${\mathcal{FI}}_{R_m}(A^*)$ is isomorphic to $FI(A)$. Theorem~ \ref{th:consequences2} implies that $FR(A)$ is isomorphic to the submonoid of $FI(A)$ generated by $E(FI(A))$ and $\{[a]\colon a\in A\}$. We thus recover the result of Fountain, Gomes and Gould~\cite{FGG} on the structure of $FR(A)$.}
\end{example}

Therefore, Theorem \ref{th:main} extends the result of~\cite{FGG} from $A^*$ to any monoid $M$ and from the set $R_m$  to any set $R$ of admissible identities.

\subsection{The case where $M$ is an inverse monoid} \label{sub:inv}
Let $S$ be an inverse monoid. In this section we provide models for ${\mathcal{FR}}_{R_m}(S)$ and ${\mathcal{FR}}_{R_s}(S)$.
Due to Theorem \ref{th:main}, it is enough to provide models for ${\mathcal{FI}}_{R_m}(S)$ and ${\mathcal{FI}}_{R_s}(S)$.
Since, obviously, ${\mathcal{FI}}_{R_m}(S)\simeq S$, we have the following statement.

\begin{proposition} Let $S$ be an inverse monoid. Then ${\mathcal{FR}}_{R_m}(S) \simeq  E(S)\rtimes S$. In particular, if $G$ is a group then ${\mathcal{FR}}_{R_m}(G)$ is a group isomorphic to $G$. 
\end{proposition}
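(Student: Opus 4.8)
The plan is to read off the result from Theorem~\ref{th:main} by specializing $M=S$ and taking $R$ to be the relations \eqref{eq:em19} and \eqref{eq:e4} that force the inclusion to be a homomorphism, so that ${\mathcal{FR}}_R(S)={\mathcal{FR}}(S)$ and ${\mathcal{FI}}_R(S)={\mathcal{FI}}(S)$. Theorem~\ref{th:main} then yields an isomorphism ${\mathcal{FR}}(S)\simeq {\mathcal{M}}(S,E({\mathcal{FI}}(S)))$ of restriction monoids, and the whole task reduces to identifying the partial action product ${\mathcal{M}}(S,E({\mathcal{FI}}(S)))$ with ${\mathcal{M}}(S,E(S))$.

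First I would use the fact, noted above, that for an inverse monoid $S$ the universal premorphism $m\mapsto [m]$ is already a homomorphism and ${\mathcal{FI}}(S)\simeq S$ via the isomorphism $\theta$ sending $[m]$ to $m$. Restricting $\theta$ to idempotents gives a semilattice isomorphism $\theta_E\colon E({\mathcal{FI}}(S))\to E(S)$.

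The one point requiring verification is that, under $\theta$, the premorphism $m\mapsto \varphi_{[m]}$ of Proposition~\ref{prop:action} into $T_{E({\mathcal{FI}}(S))}$ is carried to the Munn representation of $S$ on $E(S)$, that is, to the premorphism $m\mapsto \varphi_m$ with ${\mathrm{dom}}(\varphi_m)=(m^{\varoast})^{\downarrow}$ and $\varphi_m(e)=mem^{-1}$. This is immediate once unwound: $\varphi_{[m]}$ is by definition the value at $[m]$ of the Munn representation of ${\mathcal{FI}}(S)$, and since $\theta$ is an isomorphism of inverse monoids it intertwines the Munn representations of ${\mathcal{FI}}(S)$ and of $S$; concretely, since $\theta([m]^{\varoast})=m^{\varoast}$ the domains correspond under $\theta_E$, and for $e\le m^{\varoast}$ one computes $\theta_E(\varphi_{[m]}(\theta_E^{-1}(e)))=\theta([m]\,\theta_E^{-1}(e)\,[m]^{-1})=mem^{-1}=\varphi_m(e)$. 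Because the construction ${\mathcal{M}}(T,Y)$ of Subsection~\ref{subs:structure_proper} depends only on $T$, on $Y$, and on the defining premorphism, an isomorphism of this data induces an isomorphism of the resulting products; hence ${\mathcal{M}}(S,E({\mathcal{FI}}(S)))\simeq {\mathcal{M}}(S,E(S))$ and the first assertion follows.

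For the group case $S=G$, I would observe that $E(G)=\{1\}$, so every element of ${\mathcal{M}}(G,E(G))$ has the form $(1,g)$ and, by \eqref{eq:proper1}, $(1,g)(1,h)=(1,gh)$; thus ${\mathcal{M}}(G,E(G))\simeq G$ as a monoid. Since its projection semilattice is trivial, the operations $^*$ and $^+$ are constant, so ${\mathcal{FR}}(G)$ is a reduced restriction monoid, i.e.\ just the monoid $G$, which is a group. The only mildly technical step is the identification of the two premorphisms, but this is forced by the naturality of the Munn representation under $\theta$, so no real obstacle arises.
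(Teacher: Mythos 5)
Your proposal is correct and follows essentially the same route as the paper: the paper's proof is simply the observation that ${\mathcal{FI}}(S)\simeq S$ (noted earlier as immediate from the definition) combined with Theorem~\ref{th:main} for the homomorphism relations, with the group case following since $E(G)=\{1\}$. Your additional verification that the isomorphism $\theta\colon{\mathcal{FI}}(S)\to S$ intertwines the Munn-representation premorphisms, so that ${\mathcal{M}}(S,E({\mathcal{FI}}(S)))\simeq{\mathcal{M}}(S,E(S))$, is a correct filling-in of a step the paper treats as obvious.
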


\begin{remark}\label{rem:inv} {\em We note that $E(S)\rtimes S$  can be endowed with the structure of an inverse monoid, by putting $(e,s)^{-1} = ((es)^{*}, s^{-1})$. 
Let $\varoast$ and $\varoplus$ denote the unary operations of the induced restriction monoid structure. Observe that, unless $s^{-1}s=1$, $$(e,s)^{\varoast} = (e,s)^{-1}(e,s) = ((es)^{*},s^{-1}s) \neq ((es)^{*}, 1) = (e,s)^*$$
and, similarly, $(e,s)^{\varoplus}\neq (e,s)^+$.
It follows that, unless $S$ is a group, the image of $S$ under the morphism $\tau_{R_m}\colon S\to E(S)\rtimes S$, $s\mapsto (s^+,s)$, does not generate  $E(S)\rtimes S$ as an inverse monoid. For example, the element $(e,s)^* = ((es)^{*}, 1)$ does not belong to this image.}
\end{remark}

We now turn to the inverse monoid ${\mathcal{FI}}_{R_s}(S)$. The following result characterizes strong premorphisms from $S$ to inverse monoids.

\begin{proposition} \label{prop:prem8} Let $S,T$ be inverse monoids and $f\colon S\to T$, $s\mapsto f_s$, a premorphism. The following statements are equivalent:
\begin{enumerate}
\item $f$ is strong;
\item $f$ satisfies the following conditions: 
\begin{enumerate}
\item[(i)] for all $s\in S$: $f_{s^{-1}} = f_s^{-1}$;
\item[(ii)] for all $s,t\in S$: if $s\leq t$ then $f_s\leq f_t$.
\end{enumerate}
\end{enumerate}
\end{proposition}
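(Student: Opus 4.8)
The plan is to prove the two implications separately, throughout viewing the inverse monoids $S,T$ as restriction monoids via $a^{*}=a^{-1}a$ and $a^{+}=aa^{-1}$, so that all the restriction identities of Section~\ref{sect:prelim} and the order facts of Lemma~\ref{lem:lem1} are available. A preliminary remark I would record is that a strong premorphism sends idempotents to idempotents: if $e\in E(S)$ then left strongness gives $\varphi(e)\varphi(e)=(\varphi(e))^{+}\varphi(e)=\varphi(e)$, and hence $(\varphi(e))^{*}=\varphi(e)$ as well.

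For $(1)\Rightarrow(2)$, to obtain (i) I would verify that $\varphi(s^{-1})$ is an inverse of $\varphi(s)$ and then invoke uniqueness of inverses in $T$. The key subcomputation is $\varphi(ss^{-1})\varphi(s)=\varphi(s)$, which follows from right strongness since $\varphi(ss^{-1})\varphi(s)=\varphi(ss^{-1}s)(\varphi(s))^{*}=\varphi(s)(\varphi(s))^{*}=\varphi(s)$; feeding this into the left strong expansion $\varphi(s)\varphi(s^{-1})=(\varphi(s))^{+}\varphi(ss^{-1})$ yields $\varphi(s)\varphi(s^{-1})\varphi(s)=(\varphi(s))^{+}\varphi(s)=\varphi(s)$, and the symmetric identity with $s$ replaced by $s^{-1}$ gives $\varphi(s^{-1})\varphi(s)\varphi(s^{-1})=\varphi(s^{-1})$. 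For (ii), given $s\le t$ I would write $s=tf$ with $f=s^{-1}s=s^{*}\in E(S)$; left strongness and $ss^{-1}s=s$ give $\varphi(s)\varphi(f)=\varphi(s)$, while right strongness together with idempotent preservation $(\varphi(f))^{*}=\varphi(f)$ gives $\varphi(t)\varphi(f)=\varphi(tf)(\varphi(f))^{*}=\varphi(s)\varphi(f)$. Hence $\varphi(s)=\varphi(s)\varphi(f)=\varphi(t)\varphi(f)\le\varphi(t)$.

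For $(2)\Rightarrow(1)$, my first move would be a duality observation that, once (i) holds, collapses the two one-sided conditions into one: applying $^{-1}$ to a right strong identity $\varphi(m)\varphi(n)=\varphi(mn)(\varphi(n))^{*}$ and using (i) together with $(\varphi(n))^{*}=(\varphi(n^{-1}))^{+}$ turns it into the left strong identity for the pair $(n^{-1},m^{-1})$; since $m,n$ range over all of $S$, so do $n^{-1},m^{-1}$, so right strongness for all pairs is equivalent to left strongness for all pairs. It therefore suffices to prove right strongness. Writing $a=\varphi(m)$, $b=\varphi(n)$, $c=\varphi(mn)$, the inequality $ab\le c$ from (PM2) gives immediately $ab=abb^{*}\le cb^{*}$. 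The reverse inequality is where (ii) enters: from $mnn^{-1}\le m$ in $S$, condition (ii) yields $\varphi(mnn^{-1})\le\varphi(m)=a$, and combining this with (PM2) and (i) gives $cb^{-1}=\varphi(mn)\varphi(n^{-1})\le\varphi(mnn^{-1})\le a$; right-multiplying by $b$ and using compatibility of $\le$ with multiplication (Lemma~\ref{lem:lem1}) produces $cb^{*}=cb^{-1}b\le ab$. Thus $ab=cb^{*}$, which is right strongness, and the duality observation then delivers left strongness.

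I expect the main obstacle to be locating exactly this reverse inequality $cb^{*}\le ab$ in $(2)\Rightarrow(1)$: the inequalities coming directly from the premorphism axiom all point the ``wrong'' way, and the crux is to realize that applying (ii) to the relation $mnn^{-1}\le m$ and then right-multiplying by $b$ produces the needed bound. Everything else reduces to routine manipulation with the restriction identities, in particular \eqref{eq:frequent}, and the order facts of Lemma~\ref{lem:lem1}.
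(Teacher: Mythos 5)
Your proposal is correct and takes essentially the same route as the paper's proof: in $(2)\Rightarrow(1)$ both arguments pivot on applying (ii) to the relation $mnn^{-1}\leq m$ and then multiplying by $\varphi(n)$ (the paper compresses your two inequalities $ab\leq cb^{\varoast}$ and $cb^{\varoast}\leq ab$ into the single chain $\varphi(s)\varphi(t)\leq\varphi(st)(\varphi(t))^{\varoast}=\varphi(st)\varphi(t^{-1})\varphi(t)\leq\varphi(stt^{-1})\varphi(t)\leq\varphi(s)\varphi(t)$, and writes ``by symmetry'' where you make the inversion-duality explicit), while in $(1)\Rightarrow(2)$ both show $\varphi(s)$ and $\varphi(s^{-1})$ are mutually inverse and then exploit $s=ts^{\varoast}$, your version merely reaching mutual inverseness directly from $\varphi(ss^{-1})\varphi(s)=\varphi(s)$ rather than via the paper's intermediate inequality $(\varphi(s))^{\varoast}\leq\varphi(s^{\varoast})$. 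I see no gaps.
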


\begin{proof}
(1) $\Rightarrow$ (2) We assume that $f$ strong. If $e\in E(S)$ then $f_ef_e = f_{e^2} f_e^*= f_e f_e^{-1} f_e = f_e$,
so that $f_e\in E(T)$. Let $s\in S$. Then:
$$
f_s^{*}f_{s^*} = f_s^{-1}f_s f_{s^{-1}s}= f_s^{-1} f_s^{+} f_s =  f_s^{-1}f_s = f_s^{*},
$$
so that $f_s^{*} \leq  f_{s^*}$.  Using this, we obtain:
$$
f_s f_{s^{-1}} f_s = f_s f_{s^*} f_s^{*} =  f_sf_s^{*} = f_s.
$$
It follows that $f_s$ and $f_{s^{-1}}$ are mutually inverse, so that $f_{s^{-1}} = f_s^{-1}$, by the uniqueness of the inverse element. We have shown condition (i).

To show condition (ii), we assume that $s\leq t$, that is, that $s=ts^{*}$. Then, in view of $f_s^{*} \leq  f_{s^*}$, we have:
$$
f_tf_{s^*} = f_{ts^*}f_{s^*}^{*} = f_sf_{s^*} = f_sf_s^*f_{s^*} = f_sf_s^* = f_s,
$$
so that $f_s\leq f_t$, as needed.  

(2) $\Rightarrow$ (1) We assume that $f$ satisfies conditions (i) and (ii) of part (2). Then, for any $s,t\in S$, we have:
\begin{equation*}
f_sf_t=f_sf_tf_t^{*} \leq f_{st}f_t^{*} = f_{st}f_{t^{-1}}f_t \leq 
f_{stt^{-1}}f_t \leq f_sf_t.
\end{equation*}
It follows that $f_sf_t=f_{st}f_t^{*}$, so that $f$ is right strong.
By symmetry, it is also left strong. Hence $f$ is strong.
\end{proof}

Proposition \ref{prop:prem8} combined with \cite[Theorem 6.10, Theorem 6.17]{LMS} leads to a presentation for $S^{pr}$, the Lawson-Margolis-Steinberg {\em generalized prefix expansion} of $S$ \cite{LMS}. 

\begin{theorem}\label{th:presentation} For any inverse monoid $S$,  $S^{pr}$ is isomorphic to ${\mathcal{FI}}_{R_s}(S)$. In particular, $S^{pr}$ is generated by the set $[S]=\{[s]\colon s\in S\}$, subject to the defining relations: 
{\em \begin{enumerate}
\item $[1]=1$;
\item  $[s][t] = [st][t]^{*}$, $[s][t] = [s]^{+}[st]$,  $s,t\in S$.
\end{enumerate}}
\end{theorem}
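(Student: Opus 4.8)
The plan is to recognise both $S^{pr}$ and ${\mathcal{FI}}_s(S)$ as solutions of one and the same universal mapping problem, and then to transport the defining presentation of ${\mathcal{FI}}_s(S)$ across the resulting isomorphism. First I would recall from \cite[Theorem 6.10, Theorem 6.17]{LMS} the universal property of the generalized prefix expansion: the canonical map $\iota_{pr}\colon S\to S^{pr}$ is a premorphism preserving inverses and the natural partial order, and every premorphism $\theta\colon S\to T$ to an inverse monoid $T$ which preserves inverses and the order factors uniquely through $\iota_{pr}$, i.e.\ $\theta=\bar\theta\iota_{pr}$ for a unique homomorphism $\bar\theta\colon S^{pr}\to T$.

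The key step is then to identify this class of premorphisms using Proposition~\ref{prop:prem8}: for a premorphism between inverse monoids, preserving inverses (condition~(i)) together with preserving the order (condition~(ii)) is equivalent to being strong. Hence $\iota_{pr}$ is itself a strong premorphism, and $S^{pr}$ is precisely the universal inverse monoid with respect to strong premorphisms from $S$. By the definition of ${\mathcal{FI}}_s(S)$ (Example~\ref{ex:expansions_inv}, with $R$ the union of \eqref{eq:em19}, \eqref{eq:e2} and \eqref{eq:e3}), the canonical premorphism $s\mapsto[s]$ enjoys exactly this same universal property. Since a universal object is unique up to canonical isomorphism, there is an isomorphism $S^{pr}\xrightarrow{\sim}{\mathcal{FI}}_s(S)$ commuting with the two canonical maps.

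It remains to read off the presentation. By construction ${\mathcal{FI}}_s(S)$ is the $S''$-generated inverse monoid subject to the relations $\overline{R}$, which are the images of \eqref{eq:em19}, \eqref{eq:e2} and \eqref{eq:e3} under $\lfloor m\rfloor\mapsto[m]$. Since ${}^+={}^{\varoplus}$ and ${}^*={}^{\varoast}$ in an inverse monoid, these relations become $[1]=1$, $[s][t]=[s]^{\varoplus}[st]$ and $[s][t]=[st][t]^{\varoast}$, which are exactly relations~(1) and~(2) of the statement; transporting them along the isomorphism yields the asserted presentation of $S^{pr}$.

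The main obstacle is the first step: pinning down from \cite{LMS} the exact class of premorphisms for which $S^{pr}$ is universal and confirming that it coincides with the order- and inverse-preserving premorphisms of Proposition~\ref{prop:prem8}. Once this matching is secured, everything else is formal---the isomorphism follows from the uniqueness of universal objects, and the presentation is simply a rewriting of the defining relations of ${\mathcal{FI}}_s(S)$.
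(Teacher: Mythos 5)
Your proposal is correct and takes essentially the same route as the paper: the paper's proof of Theorem~\ref{th:presentation} is precisely the combination of Proposition~\ref{prop:prem8} (strong $=$ inverse- and order-preserving for premorphisms between inverse monoids) with the universal property of $S^{pr}$ from \cite[Theorems 6.10, 6.17]{LMS}, after which the presentation is read off from the defining relations of ${\mathcal{FI}}_s(S)$. You merely make explicit the uniqueness-of-universal-objects step that the paper leaves implicit.
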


Another presentation for $S^{pr}$ was obtained by Buss and Exel in \cite{BE}.

\begin{remark}{\em It follows from the proof of Proposition~\ref{prop:prem8} that the statement of Proposition~\ref{prop:prem8}  (and thus of Theorem \ref{th:presentation}) remains valid in the semigroup setting, that is, in the absence of the identity element and the requirement that the identity element be preserved by a premorphism.}
\end{remark}

From Theorem \ref{th:main} and Theorem \ref{th:presentation} we obtain the following.

\begin{corollary}\label{cor:presentation_inv} Let $S$ be an inverse monoid. Then: 
$${\mathcal{FR}}_{R_s}(S) \simeq E(S^{pr})\rtimes S.$$
\end{corollary}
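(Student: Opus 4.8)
The plan is to combine the two main structural results just proved. The final statement, Corollary~\ref{cor:presentation_inv}, asserts that for an inverse monoid $S$ we have ${\mathcal{FR}}_s(S) \simeq {\mathcal{M}}(S, E(S^{pr}))$. The key observation is that this is a direct consequence of Theorem~\ref{th:main} (our main structural result) specialized to the case $R = s$ (strong premorphisms), together with Theorem~\ref{th:presentation} (the presentation identifying $S^{pr}$ with ${\mathcal{FI}}_s(S)$). No new computation should be required; the entire content is a substitution.

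Concretely, I would first invoke Theorem~\ref{th:main} with $R$ the set of relations defining strong premorphisms. That theorem states that $\widetilde{\eta}_R$ is an isomorphism of restriction monoids, so in particular
\[
{\mathcal{FR}}_s(S) \simeq {\mathcal{M}}(S, E({\mathcal{FI}}_s(S))).
\]
This already expresses ${\mathcal{FR}}_s(S)$ as a partial action product built from $S$ and the idempotent semilattice of ${\mathcal{FI}}_s(S)$. Next, I would apply Theorem~\ref{th:presentation}, which gives an isomorphism $S^{pr} \simeq {\mathcal{FI}}_s(S)$ of inverse monoids. An isomorphism of inverse monoids restricts to an isomorphism of their semilattices of idempotents, so $E(S^{pr}) \simeq E({\mathcal{FI}}_s(S))$ as semilattices.

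The final step is to argue that the partial action product construction ${\mathcal{M}}(S, Y)$ depends on the semilattice $Y$ only up to the induced premorphism from $S$ into its Munn monoid, and that this data is transported along the semilattice isomorphism. Here one uses that the premorphism $m \mapsto \varphi_{[m]}$ of Proposition~\ref{prop:action} is defined intrinsically from the inclusion premorphism $S \to {\mathcal{FI}}_s(S)$ composed with the Munn representation; under the isomorphism $S^{pr} \simeq {\mathcal{FI}}_s(S)$ the corresponding premorphism into $T_{E(S^{pr})}$ agrees, so the two partial action products are isomorphic via the map $(y,s) \mapsto (\theta(y), s)$, where $\theta\colon E({\mathcal{FI}}_s(S)) \to E(S^{pr})$ is the restricted semilattice isomorphism. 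Chaining the three isomorphisms yields
\[
{\mathcal{FR}}_s(S) \simeq {\mathcal{M}}(S, E({\mathcal{FI}}_s(S))) \simeq {\mathcal{M}}(S, E(S^{pr})),
\]
as desired.

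I expect the only point requiring any care—and hence the main (mild) obstacle—is the last step: verifying that the isomorphism $S^{pr} \simeq {\mathcal{FI}}_s(S)$ is compatible with the partial action data, so that it induces an isomorphism of the associated products rather than merely of the underlying semilattices. Since both premorphisms into the respective Munn monoids arise by the same recipe (inclusion premorphism followed by the Munn representation) and $S$ is fixed on both sides, this compatibility is automatic, but it is worth stating explicitly rather than treating the substitution of $E(S^{pr})$ for $E({\mathcal{FI}}_s(S))$ as purely formal. Everything else is an immediate citation of the two quoted theorems.
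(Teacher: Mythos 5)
Your proposal is correct and follows exactly the paper's route: the corollary is derived immediately from Theorem~\ref{th:main} (with $R$ the strong-premorphism relations) together with the isomorphism $S^{pr}\simeq{\mathcal{FI}}_s(S)$ of Theorem~\ref{th:presentation}. Your explicit check that the semilattice isomorphism $E({\mathcal{FI}}_s(S))\simeq E(S^{pr})$ transports the Munn-representation premorphism, and hence the partial action product, is a detail the paper leaves implicit but is exactly the right point to verify.
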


Using the known model for $S^{pr}$ \cite[Proposition 6.16]{LMS} this gives a model for ${\mathcal{FR}}_{R_s}(S)$.

There is an analogue of Remark \ref{rem:inv} for ${\mathcal{FR}}_{R_s}(S)$: it can be similarly endowed with the structure of an inverse monoid, however, unless $S$ is a group, $\tau_{R_s}(S)$ does not generate  $E({\mathcal{FI}}_{R_s}(S))\rtimes S$ as an inverse monoid.

\begin{corollary}\label{cor:group} Let $G$ be a group. Then ${\mathcal{FR}}_{R_s}(G)$ is an inverse monoid isomorphic to each of ${\mathcal{FI}}_{R_s}(G)$ and $\widetilde{G}^{{\mathcal R}}$.
\end{corollary}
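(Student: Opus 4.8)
The plan is to establish a chain of isomorphisms
$$
{\mathcal{FR}}_s(G) \;\simeq\; {\mathcal{FI}}_s(G) \;\simeq\; G^{pr} \;\simeq\; \tilde{G}^{{\mathcal R}},
$$
and then to observe that, being isomorphic as a restriction monoid to an inverse monoid, ${\mathcal{FR}}_s(G)$ is itself an inverse monoid (the inversion operation of ${\mathcal{FI}}_s(G)$ transports across the isomorphism).

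The two rightmost isomorphisms are essentially already in hand. Since a group is in particular an inverse monoid, Theorem \ref{th:presentation} gives ${\mathcal{FI}}_s(G) \simeq G^{pr}$. The identification $G^{pr} \simeq \tilde{G}^{{\mathcal R}}$ for a group $G$ is the defining feature of the generalized prefix expansion of \cite{LMS}, which was constructed precisely so as to recover the Birget--Rhodes prefix expansion in the group case, so I would simply cite \cite{LMS} for it.

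The substantive step is the leftmost isomorphism ${\mathcal{FR}}_s(G) \simeq {\mathcal{FI}}_s(G)$. Since $G$ embeds into a group, namely itself, Theorem \ref{th:consequences2} applies and identifies ${\mathcal{FR}}_s(G)$ with the restriction submonoid of ${\mathcal{FI}}_s(G)$ consisting of all elements $e[g]$ with $e\in E({\mathcal{FI}}_s(G))$ and $g\in G$. It therefore suffices to show that this submonoid is all of ${\mathcal{FI}}_s(G)$. The inclusion premorphism $G \to {\mathcal{FI}}_s(G)$ is strong, so by Proposition \ref{prop:prem8} we have $[m]^{-1} = [m^{-1}]$ for every $m\in G$; hence the inverses of the generators are again generators, and every element of ${\mathcal{FI}}_s(G)$ is a product $[m_1]\cdots[m_k]$ with $m_i\in G$. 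Using the left-strong relation $[g][m]=[g]^{\varoplus}[gm]$, a straightforward induction on $k$ collapses such a product into the form $e[m_1\cdots m_k]$ with $e$ idempotent: if $[m_1]\cdots[m_{k-1}]=e[g]$, then $[m_1]\cdots[m_k]=e[g][m_k]=e[g]^{\varoplus}[gm_k]$, which is again of the required shape. Thus every element of ${\mathcal{FI}}_s(G)$ lies in the submonoid, giving ${\mathcal{FR}}_s(G)\simeq{\mathcal{FI}}_s(G)$.

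I expect the only genuine subtlety to be the equality $G^{pr}\simeq\tilde{G}^{{\mathcal R}}$, which I would lean on \cite{LMS} for rather than reprove; alternatively one could combine Corollary \ref{cor:presentation_inv} with the known model of $G^{pr}$ to read off $\tilde{G}^{{\mathcal R}}$ directly. The inductive collapse is routine once $[m]^{-1}=[m^{-1}]$ is available, and the remaining steps are bookkeeping rather than mathematics.
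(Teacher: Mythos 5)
Your proposal is correct and follows essentially the same route as the paper: the paper's proof is exactly the chain ${\mathcal{FR}}_s(G)\simeq{\mathcal{FI}}_s(G)$ via Theorem~\ref{th:consequences2}, together with ${\mathcal{FI}}_s(G)\simeq G^{pr}\simeq\tilde{G}^{{\mathcal R}}$ via Theorem~\ref{th:presentation} and the cited fact from \cite{LMS}. Your only addition --- using Proposition~\ref{prop:prem8} to get $[m]^{-1}=[m^{-1}]$ and the left-strong collapse to see that the submonoid of elements $e[g]$ is all of ${\mathcal{FI}}_s(G)$ --- is a correct filling-in of a step the paper's two-line proof leaves implicit.
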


\begin{proof} The isomorphism ${\mathcal{FR}}_{R_s}(G) \simeq {\mathcal{FI}}_{R_s}(G)$ follows from Theorem \ref{th:consequences2}. The isomorphism ${\mathcal{FI}}_{R_s}(G) \simeq \widetilde{G}^{{\mathcal R}}$ follows from Theorem \ref{th:presentation} and $G^{pr} \simeq \widetilde{G}^{{\mathcal R}}$.  
\end{proof}

\section*{Acknowlegement} The author is in debt to the referee for pointing out that admissible relations should be independent of a specific monoid $M$, which inspired me to introduce admissible identities. I am also grateful for the referee's comments and suggestions, which led to a considerable improvement of the presentation of my ideas and results.

\end{document}